\documentclass[11pt]{article}

\usepackage[margin=2.7cm]{geometry}
\usepackage{amsmath,amssymb,amsthm,amsfonts,mathtools,mathrsfs}
\usepackage[round]{natbib}
\usepackage{xcolor}
\usepackage{booktabs}
\usepackage{microtype}
\usepackage{pgfplots}
\pgfplotsset{compat=1.16}
\usepackage{graphicx}
\definecolor{cHo}{HTML}{E87BA4}\definecolor{cEBf}{HTML}{EB6834}
\definecolor{cEBc}{HTML}{EDA100}\definecolor{cBf}{HTML}{2A78D6}
\definecolor{cBc}{HTML}{1BAF7A}\definecolor{cBen}{HTML}{6250D6}
\definecolor{cNo}{HTML}{8A8A8A}
\usepackage[colorlinks=true,linkcolor=blue!60!black,citecolor=blue!60!black,urlcolor=blue!60!black]{hyperref}

\theoremstyle{plain}
\newtheorem{theorem}{Theorem}
\newtheorem{proposition}{Proposition}
\newtheorem{lemma}{Lemma}
\newtheorem{corollary}{Corollary}
\theoremstyle{definition}
\newtheorem{definition}{Definition}
\newtheorem{assumption}{Assumption}
\theoremstyle{remark}
\newtheorem{remark}{Remark}

\newcommand{\PP}{\mathbb{P}}
\newcommand{\EE}{\mathbb{E}}
\newcommand{\VV}{\operatorname{Var}}
\newcommand{\CC}{\mathcal{C}}
\newcommand{\HH}{\mathcal{H}}
\newcommand{\SSS}{\mathcal{S}}
\newcommand{\EEe}{\mathcal{E}}
\newcommand{\QQ}{\mathcal{Q}}
\newcommand{\ind}{\mathbf{1}}
\newcommand{\osc}{\operatorname{osc}}
\newcommand{\argmax}{\operatorname*{arg\,max}}
\newcommand{\eps}{\varepsilon}
\newcommand{\wh}[1]{\widehat{#1}}
\newcommand{\ov}[1]{\overline{#1}}

\title{Finite-sample certification of coverage regret for budgeted document selection under adaptive stratified auditing}
\author{Nora Serdyukova\\[0.35em]
\small Department of Statistics, University of Concepcion, Chile\\
\small \texttt{nserdyukova@udec.cl}}
\date{}

\begin{document}
\maketitle

\begin{abstract}
Selecting a budget-constrained set of documentary fragments, for instance
the knowledge base of a retrieval-augmented generation system, raises a
statistical question: how much expected query coverage is lost relative to
the best feasible selection when the query distribution is unknown? We
construct finite-sample, distribution-free upper confidence bounds for this
coverage regret that remain valid after data-dependent selection. A pilot
stage may select the document set, screen competitors, build comparison
networks and learn an annotation policy. Then an independent audit acquires
relevance labels with probabilities that are constant within observable
strata. The main result is a simultaneous bound for all retained coverage
contrasts. The bound is made observable by empirical-Bernstein variance calibration or by a
one-stage construction without calibration, and it accounts for estimated
stratum proportions. For maximum-type coverage we prove variance contractions
for swaps of redundant fragments, give a condition under which multiscale
radii improve on a direct union bound, and show that such redundancy cannot
be certified from samples alone. A simulation study confirms validity and
quantifies the gains from variance calibration and certified redundancy.
\end{abstract}

\noindent\textbf{MSC2020 subject classifications:} Primary 62G15, 60E15; secondary 68P20, 90C27.

\medskip
\noindent\textbf{Keywords and phrases:} Regret certification, postselection inference, empirical Bernstein inequalities, chaining, submodular coverage, retrieval-augmented generation.

\medskip

%% =====================================================================
\section{Introduction}\label{sec:intro}
%% =====================================================================

Constructing a compact knowledge base from a large collection of documentary
fragments, for example the retrieval corpus of a retrieval-augmented generation
(RAG) system, requires choosing which fragments to retain under a storage or
annotation budget. The selected collection should support future queries,
whose probability distribution is generally not known in advance. This leads
to a stochastic set-optimization problem: maximize expected query coverage,
then certify the loss in coverage relative to the best feasible collection.
The distinction between a computational approximation guarantee and a
statistical coverage certificate is essential. A near-optimal value for the
empirical selection objective need not imply a small population regret.

For coverage defined by maxima of nonnegative relevance scores, the population
objective is monotone and submodular. This connects the selection problem to
classical submodular maximization \citep{NemhauserWolseyFisher78,KrauseGolovin14}.
Yet the statistical component is different: relevance assessments may be costly,
and queries may be deliberately oversampled in contexts believed to distinguish
competing document selections. Such adaptive data acquisition must not be
allowed to invalidate a confidence statement about the unknown query population.

Concentration for means with unknown variances is classical
\citep{MaurerPontil09,BoucheronLugosiMassart13}. Stratified empirical-Bernstein
methods have been developed for other estimation problems, for instance by
\citet{BurgessChapman21}, and context-dependent evaluation and inverse-probability
correction have substantial precedents \citep{DudikLangfordLi11}.
Generic chaining \citep{Talagrand05,Dirksen15} and local complexity theory
\citep{BartlettBousquetMendelson05,Koltchinskii06} explain why it may be possible
to replace a crude logarithm of the number of candidates by a multiscale
complexity. Consequently, neither stratification nor chaining is claimed to
be new here. The goal is to give a self-contained and precisely scoped
concentration argument for \emph{documentary coverage contrasts}, including
all the conditioning, sampling, and calibration steps needed to turn the
inequality into a valid postselection regret certificate.

Our results are the following. First, we derive a conditional,
variance-sensitive Bernstein bound for increments of the empirical coverage
process (Lemma~\ref{lem:increment}). Second, a pre-audit multiscale
representation yields a simultaneous pathwise bound for all retained
competitors (Theorem~\ref{th:chain}). Third, the variance factors can be
replaced by simultaneous empirical upper confidence bounds
(Theorem~\ref{th:empirical}). Fourth, a separate estimate of the unknown
context proportions yields an unconditional confidence certificate for
reduced-universe regret (Theorem~\ref{th:regret}); we also give a
complementary pooled Bennett formulation (Proposition~\ref{prop:pooled-bennett}).
Finally, a contraction lemma
links the edge variances to both semantic redundancy and the frequency with
which a swapped fragment is actually responsible for coverage
(Lemma~\ref{lem:activity}); Corollary~\ref{cor:twolevel} provides an explicit
condition for a two-level improvement over a direct union bound.
Two complementary refinements are proved: an edgewise allocation of confidence
error, which controls each distinct edge only once
(Proposition~\ref{prop:weighted-edges}), and a one-stage
empirical-Bernstein network bound that does not require preliminary
variance calibration (Theorem~\ref{th:direct-empirical}). A finite-sample
rare-query example (Proposition~\ref{prop:rare-obstruction}) explains why
empirical profile agreement cannot certify uniform approximation envelopes.

A simulation study (Section~\ref{sec:simulation}) confirms the validity of
all certificates at the nominal level and compares their sizes; it shows when
variance calibration and the multiscale network pay off, and that the latter
requires certified redundancy envelopes.

The reader should distinguish \emph{validity}, which is established under
specified assumptions, from \emph{efficiency}, which depends on the geometry
of the competitors and on the audit design.

%% =====================================================================
\section{Statistical model, selection problem, and notation}\label{sec:model}
%% =====================================================================

\subsection{Query population and document selection}

Let $(\QQ,\mathscr{Q})$ be a measurable query space. Let $Q$ have an arbitrary
and unknown probability distribution $P$ on this space. The set of available
documentary fragments is the finite nonempty set
$\mathcal{A}=\{1,\ldots,K\}$, with deterministic costs $c_j>0$.
For a budget $B\ge0$, define the feasible selection family
\begin{equation}\label{eq:feasible}
 \HH_B=\left\{S\subseteq\mathcal{A}:\sum_{j\in S}c_j\le B\right\}.
\end{equation}
The empty set is feasible and is assigned zero coverage. A relevance function
$r_j:\QQ\to[0,1]$ is associated with each fragment. Unless stated otherwise,
these functions are fixed, measurable and evaluated without labeling error
when a query is audited. The coverage of a selection $S$ on query $q$ is
\begin{equation}\label{eq:coverage}
 h_S(q)=\begin{cases}\displaystyle\max_{j\in S}r_j(q),&S\ne\varnothing,\\
0,&S=\varnothing.\end{cases}
 \qquad F(S)=\EE_P h_S(Q).
\end{equation}
Thus $0\le h_S,F(S)\le1$.

Since $\HH_B$ is finite and nonempty, there exists at least one population
maximizer. Fix any
\begin{equation}\label{eq:optimum}
 S^\star\in\argmax_{S\in\HH_B}F(S),\qquad
 R_B(S)=F(S^\star)-F(S)\in[0,1].
\end{equation}
The definition does not presume that the maximum can be calculated
computationally on a large instance. It provides the statistical target.

\begin{proposition}[Monotonicity and submodularity]\label{prop:submod}
For every fixed query $q$, the set function $S\mapsto h_S(q)$ is normalized,
monotone and submodular. The population function $F$ possesses the same
properties.
\end{proposition}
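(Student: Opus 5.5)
The argument is pointwise first: fix a query $q$ and write $a_j=r_j(q)\in[0,1]$, so that $h_S(q)=\max_{j\in S}a_j$ with the convention that the empty maximum is $0$. Since every $a_j\ge0$, this convention is consistent with $h_S(q)=\max\bigl(\{0\}\cup\{a_j:j\in S\}\bigr)$ for all $S$, including $S=\varnothing$. Using this uniform formula removes the case distinction in \eqref{eq:coverage}. Normalization $h_\varnothing(q)=0$ holds by definition. Monotonicity is immediate: if $S\subseteq T$, then $\{0\}\cup\{a_j:j\in S\}\subseteq\{0\}\cup\{a_j:j\in T\}$, and the maximum over a larger set is at least as large.

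For submodularity I will verify the diminishing-returns form. For $S\subseteq T\subseteq\mathcal{A}$ and $k\notin T$, the claim is
\[
h_{S\cup\{k\}}(q)-h_S(q)\ge h_{T\cup\{k\}}(q)-h_T(q).
\]
Put $m_S=h_S(q)$ and $m_T=h_T(q)$, so that $0\le m_S\le m_T$ by monotonicity. Then $h_{S\cup\{k\}}(q)=\max(m_S,a_k)$, and the marginal gain equals $(a_k-m_S)_+$; likewise the gain for $T$ is $(a_k-m_T)_+$. Since $x\mapsto(a_k-x)_+$ is nonincreasing, the inequality follows. Diminishing returns is equivalent to the lattice inequality $h_{S}+h_T\ge h_{S\cup T}+h_{S\cap T}$ on a finite ground set; this is a standard fact that I will cite rather than reprove. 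Alternatively, one can check the lattice form directly. Since $h_{S\cup T}=\max(h_S,h_T)$, the inequality reduces to $\min(h_S,h_T)\ge h_{S\cap T}$, which is exactly monotonicity.

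The population statements then follow by integration. Measurability of $h_S$ holds because it is a finite maximum of measurable functions, and boundedness in $[0,1]$ ensures that $F(S)=\EE_P h_S(Q)$ is finite. $F(\varnothing)=0$. Monotonicity and the lattice or diminishing-returns inequalities hold pointwise for every $q$ with the same sets $S,T,k$. Taking expectations and using linearity and monotonicity of the integral preserves each inequality. The only step requiring care is the reduction of marginal gains to $(a_k-m)_+$ and the use of the nonnegativity convention for $S=\varnothing$. With the unified $\max$ over $\{0\}\cup\cdots$, I expect no real obstacle.
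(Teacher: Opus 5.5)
Your proposal is correct and follows essentially the same route as the paper. Both reduce submodularity to the pointwise identity $h_{S\cup\{k\}}(q)-h_S(q)=[r_k(q)-h_S(q)]_+$, use that it is nonincreasing in $h_S(q)$, and pass to $F$ by taking expectations. Your unified formula $\max(\{0\}\cup\{a_j:j\in S\})$ makes explicit the role of $r_j\ge0$ when $S=\varnothing$, which the paper leaves implicit, and your lattice-form check is a harmless extra.
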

\begin{proof}
Normalization follows from $h_{\varnothing}(q)=0$, while monotonicity
follows because a maximum cannot decrease when additional elements are
included. If $A\subseteq B$ and $j\notin B$, then
\[
 h_{A\cup\{j\}}(q)-h_A(q)
    =[r_j(q)-h_A(q)]_+\ge[r_j(q)-h_B(q)]_+
    =h_{B\cup\{j\}}(q)-h_B(q),
\]
where $[x]_+=\max(x,0)$. This is the diminishing-returns characterization
of submodularity. Expectations preserve normalization, monotonicity and
the displayed pointwise inequality.
\end{proof}

\subsection{Pilot selection and honest screening}

We use $\mathscr{F}_0$ to denote the sigma-field generated by all pilot
observations and all randomness used before the independent audit and
unlabelled-proportion samples. The pilot produces a feasible selected set
$\wh S\in\HH_B$ and a finite (possibly random) candidate family
$\CC\subseteq\HH_B$ such that $\wh S\in\CC$. Both are
$\mathscr{F}_0$-measurable. The \emph{documentary contrast} against a
competitor $T\in\CC$ is
\begin{equation}\label{eq:contrast}
 D_T(q)=h_T(q)-h_{\wh S}(q),\qquad
 \Delta_T=\EE_P D_T(Q)=F(T)-F(\wh S).
\end{equation}
Every contrast is in $[-1,1]$ and $D_{\wh S}\equiv0$.
The pilot screening guarantee, when screening is used, is
\begin{equation}\label{eq:screen}
 \PP\{\text{some } S^\star\in\CC\}\ge1-\alpha_s.
\end{equation}
If $\CC=\HH_B$, this property holds deterministically and $\alpha_s=0$.
Here and below ``some'' allows non-uniqueness of the population optimum.
A sufficient, explicitly verifiable pilot rule is established in
Proposition~\ref{prop:screen}.

\begin{proposition}[A conservative pilot screening rule]\label{prop:screen}
Assume that a fully labeled iid pilot of $n_0$ queries from $P$, independent
of subsequent data, is available, and let
$\wh F_0(S)=n_0^{-1}\sum_{i=1}^{n_0}h_S(Q_i^{(0)})$ for all $S\in\HH_B$.
For $0<\alpha_s<1$, set
\begin{equation}\label{eq:pilot-radius}
 r_0=\sqrt{\frac{\log(2|\HH_B|/\alpha_s)}{2n_0}}.
\end{equation}
After choosing any pilot-measurable $\wh S\in\HH_B$, let
\begin{equation}\label{eq:pilot-screen}
 \CC=\{\wh S\}\cup
 \left\{T\in\HH_B:\wh F_0(T)-\wh F_0(\wh S)+2r_0\ge0\right\}.
\end{equation}
Then \eqref{eq:screen} holds.
\end{proposition}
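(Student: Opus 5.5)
The plan is a uniform Hoeffding bound over all of $\HH_B$ on a single good event. Since $\HH_B$ is deterministic and finite, with $|\HH_B|\ge1$, define
\[
 E_0=\Bigl\{\max_{S\in\HH_B}|\wh F_0(S)-F(S)|\le r_0\Bigr\}.
\]
For each fixed $S$, the summands $h_S(Q_i^{(0)})\in[0,1]$ are iid with mean $F(S)$. Hoeffding's two-sided inequality therefore gives
\[
 \PP\{|\wh F_0(S)-F(S)|>r_0\}\le 2\exp(-2n_0r_0^2)=\alpha_s/|\HH_B|,
\]
using the choice \eqref{eq:pilot-radius}. A union bound over the $|\HH_B|$ sets yields $\PP(E_0)\ge1-\alpha_s$. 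This bound is uniform over every set, so it does not matter that $\wh S$ is chosen from the same pilot data.

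Next, work on $E_0$ and fix any population maximizer $S^\star\in\argmax_{S\in\HH_B}F(S)$. This set is deterministic and exists because $\HH_B$ is finite and nonempty. Because $\wh S\in\HH_B$ is random but lies in $\HH_B$, the uniform bound applies to it pathwise. Chaining the three inequalities gives
\[
 \wh F_0(S^\star)-\wh F_0(\wh S)\ge (F(S^\star)-r_0)-(F(\wh S)+r_0)=R_B(\wh S)-2r_0\ge-2r_0,
\]
since $R_B(\wh S)\ge0$ by optimality of $S^\star$. Hence $\wh F_0(S^\star)-\wh F_0(\wh S)+2r_0\ge0$, so $S^\star$ belongs to the second set in \eqref{eq:pilot-screen}, and therefore to $\CC$. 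Thus $E_0\subseteq\{\text{some }S^\star\in\CC\}$, which gives \eqref{eq:screen}.

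Finally, I would record two side points. Both $\CC$ and $\wh S$ are $\mathscr F_0$-measurable, because they are functions of the pilot and any auxiliary pilot randomness. The assumed independence of the pilot from later data is not needed for this proposition; it matters only for subsequent conditioning.

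The only delicate step is the data dependence of $\wh S$. The plan is to avoid it entirely by making the concentration event uniform over the deterministic family $\HH_B$ rather than conditioning on $\wh S$. That is also why the radius contains $\log|\HH_B|$ instead of a smaller count.
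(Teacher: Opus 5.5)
Your proof is correct and follows the paper's argument. You apply Hoeffding's inequality to each fixed $S\in\HH_B$ and take a union bound over the deterministic family to get a uniform event, then compare $\wh F_0$ at $S^\star$ and $\wh S$ on that event; the uniformity is what handles the data-dependent choice of $\wh S$, and your side remark that the pilot's independence from later data is not used here is also accurate.
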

\begin{proof}
By Hoeffding's inequality \citep{Hoeffding63} and a union bound, with probability at least
$1-\alpha_s$,
$|\wh F_0(S)-F(S)|\le r_0$ simultaneously for all $S\in\HH_B$.
On this event, if $S^\star$ is a population maximizer,
\[
 \wh F_0(S^\star)-\wh F_0(\wh S)+2r_0
 \ge F(S^\star)-F(\wh S)\ge0.
\]
Thus $S^\star\in\CC$. The argument remains valid when $\wh S$ is chosen
adaptively from the same pilot, precisely because the pilot bound is uniform.
\end{proof}

\subsection{Observable contexts and the audit design}

Let $X=\phi(Q)$ be an observable categorical feature, inexpensive to
record, with values in $\SSS=\{0,\ldots,H-1\}$, where $H\ge2$ is fixed.
Let $\pi_h=\PP(X=h)>0$ and
$P_h(\cdot)=\PP(Q\in\cdot\mid X=h)$.
Both $\pi_h$ and $P_h$ are unknown. We distinguish an unlabelled sample,
used to estimate the population proportions, from the labeled audit sample.

\begin{assumption}[Independent unlabelled queries]\label{ass:unlabelled}
The queries $Q_1^{(u)},\ldots,Q_{N_u}^{(u)}$ are, conditionally on
$\mathscr F_0$, iid from $P$ and independent
of the queries and relevance labels used for audit. The empirical proportions
\[
 \wh\pi_h=\frac1{N_u}\sum_{i=1}^{N_u}\ind\{\phi(Q_i^{(u)})=h\}
\]
are observed, $N_u\ge1$. The multiscale networks and acquisition policy
may use this unlabelled sample, but may not use audit relevance labels.
\end{assumption}

\begin{assumption}[Stratified, label-ignorable audit]\label{ass:audit}
Conditionally on the pilot, the unlabelled sample, and the labeled counts
$n_h\ge2$, the acquired audit queries
$Q_{h,1},\ldots,Q_{h,n_h}$ are iid from $P_h$ within each stratum, and
the collections are mutually independent across strata. The counts and the
mechanism that chooses them contain no information about the unobserved
relevance values beyond the contexts. All relevance values needed to
calculate $h_T(Q_{h,i})$, $T\in\CC$, are observed for an acquired query.
\end{assumption}

An important implementation satisfying Assumption~\ref{ass:audit} starts
with fresh iid queries, observes $X$, and acquires full relevance labels
with probability $p_h\in(0,1]$ in stratum $h$, where $p_h$ is learned
before audit and is constant within the stratum. Conditional on the
resulting stratum counts (and the design information), the selected queries
have the required conditional law. A design based on a new feature
\emph{within} an existing stratum does not generally satisfy this
assumption; its selection bias must be addressed separately, for example
through known-propensity estimators as in \citet{DudikLangfordLi11}.

The sample size $n_h$ may be random. Every assertion below is conditional
on the pre-audit information and on the observed admissible counts, and
therefore remains valid after averaging over their joint distribution.
If some $n_h<2$, our empirical-variance formulas are not used and the
trivial regret upper bound $1$ can be returned.

\subsection{Conditional and population coverage contrasts}

For every $T\in\CC$, write
\begin{equation}\label{eq:stratum-statistics}
 \mu_{h,T}=\EE[D_T(Q)\mid X=h],\quad
 \ov D_{h,T}=\frac1{n_h}\sum_{i=1}^{n_h}D_T(Q_{h,i}),\quad
 \wh\Delta_T=\sum_{h\in\SSS}\wh\pi_h\ov D_{h,T}.
\end{equation}
The population contrast decomposes as
$\Delta_T=\sum_h\pi_h\mu_{h,T}$.
The \emph{centered weighted audit process} is
\begin{equation}\label{eq:process}
 Z_T=\sum_{h\in\SSS}\wh\pi_h(\mu_{h,T}-\ov D_{h,T}),
 \qquad Z_{\wh S}=0.
\end{equation}
The symbols $\wh\Delta_T$ and $Z_T$ will not be confused: the former
is observable, while the latter contains unknown conditional means and
is used only in the analysis.

\begin{table}[t]
\centering\small
\caption{Main notation. All candidate-dependent objects may be random through
pre-audit pilot information.}\label{tab:notation}
\begin{tabular}{@{}ll@{}}\hline
Symbol & Meaning \\ \hline
$\HH_B,\CC,\wh S,S^\star$ & feasible, retained, selected, optimal document sets\\
$h_S,F(S),R_B(S)$ & query coverage, mean coverage, population regret\\
$D_T,\Delta_T$ & contrast against $\wh S$ and its population mean\\
$H,\pi_h,\wh\pi_h,P_h$ & number, true/estimated weights, conditional query law\\
$n_h,\ov D_{h,T},\mu_{h,T}$ & audited count, sample/conditional contrast means\\
$Z_T$ & weighted, centered audit error\\
$\mathcal N_\ell,\Pi_\ell,\mathcal E_\ell$ & networks, parent maps, distinct comparison edges\\
$d_2,d_\infty$ & conditional variance and range semimetrics\\
$\ov v_e,\ov b_e$ & valid observable variance/range upper bounds\\
$\alpha_s,\alpha_a,\alpha_v,\alpha_\pi,\alpha_g$ & error budgets for screening, audit, variance, weights, means\\
$\alpha_B,\alpha_v',\alpha_{\mathrm{act}},\alpha_{\mathrm{rep}}$ & budgets for Bennett, its variances, activity, representation\\
\hline\end{tabular}
\end{table}

%% =====================================================================
\section{A local Bernstein inequality for documentary increments}\label{sec:increment}
%% =====================================================================

For document selections $U,V$, let
\begin{equation}\label{eq:edgefun}
 G_{U,V}(q)=h_U(q)-h_V(q)\in[-1,1].
\end{equation}
We identify the \emph{directed edge} $e=(U,V)$ with $G_e=G_{U,V}$.
For $h\in\SSS$, denote
\[
 v_{h,e}=\VV(G_e(Q)\mid X=h),\quad
 \osc_h(G_e)=\mathop{\mathrm{ess\,sup}}_{P_h}G_e
                     -\mathop{\mathrm{ess\,inf}}_{P_h}G_e.
\]
The latter is an essential range, hence is at most $2$, while
$0\le v_{h,e}\le1$.
Condition on $\mathscr F_0$, the unlabelled observations and the counts,
so that all $\wh\pi_h$, $n_h$ and pre-audit edges are fixed. Define
\begin{align}
 d_2^2(e)&=\sum_h\frac{\wh\pi_h^2}{n_h}v_{h,e},\label{eq:d2}\\
 d_\infty(e)&=\max_h\frac{\wh\pi_h}{n_h}\osc_h(G_e).\label{eq:dinfty}
\end{align}
These are semimetrics on coverage profiles (they may vanish for distinct
selections). For clarity, the square in $d_2^2$ is part of the notation.

\begin{lemma}[Conditional Bernstein increment]\label{lem:increment}
Under Assumption~\ref{ass:audit}, for every pre-audit edge $e=(U,V)$
and every $t>0$, conditional on the pre-audit information and counts,
\begin{equation}\label{eq:bernstein-edge}
 \PP\left\{Z_U-Z_V>
 \sqrt{2d_2^2(e)t}+\frac13d_\infty(e)t
 \ \middle|\ \text{pre-audit information, counts}\right\}\le e^{-t}.
\end{equation}
No assumption of independence across different edges is needed.
\end{lemma}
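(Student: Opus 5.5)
First write the increment as a sum of independent centered terms. Since $D_U-D_V=h_U-h_V=G_e$ (the $h_{\wh S}$ terms cancel), we have $\mu_{h,U}-\mu_{h,V}=\EE[G_e(Q)\mid X=h]$ and $\ov D_{h,U}-\ov D_{h,V}=n_h^{-1}\sum_i G_e(Q_{h,i})$. Hence
\[
 Z_U-Z_V=\sum_{h\in\SSS}\sum_{i=1}^{n_h}\xi_{h,i},\qquad
 \xi_{h,i}=\frac{\wh\pi_h}{n_h}\bigl(\EE[G_e(Q)\mid X=h]-G_e(Q_{h,i})\bigr).
\]
Conditional on the pre-audit information and the counts, the weights $\wh\pi_h/n_h$ and the edge $e$ are fixed, because the edge is $\mathscr F_0$-measurable and $\wh\pi_h$ depends only on the unlabelled sample. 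By Assumption~\ref{ass:audit}, the $\xi_{h,i}$ are then independent, mean zero, and satisfy $\VV(\xi_{h,i})=(\wh\pi_h/n_h)^2 v_{h,e}$. Summing gives total variance exactly $d_2^2(e)$.

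Next bound the one-sided range. Almost surely $\xi_{h,i}\le(\wh\pi_h/n_h)(\EE[G_e\mid X=h]-\mathrm{ess\,inf}_{P_h}G_e)\le(\wh\pi_h/n_h)\osc_h(G_e)\le d_\infty(e)$. Thus every summand is bounded above by $b=d_\infty(e)$, which is all the classical Bernstein inequality needs in its one-sided form. The standard route is Bennett's moment-generating-function bound $\EE e^{\lambda\xi}\le\exp\bigl(\VV(\xi)\,b^{-2}(e^{\lambda b}-1-\lambda b)\bigr)$ for $\xi\le b$. Then $e^{x}-1-x\le x^2/(2(1-x/3))$ yields the sub-gamma bound $\log\EE e^{\lambda(Z_U-Z_V)}\le \lambda^2 d_2^2/(2(1-\lambda b/3))$. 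The inversion in \citet[Sec.~2.4]{BoucheronLugosiMassart13} then gives the deviation bound $\sqrt{2d_2^2 t}+bt/3$ with probability at most $e^{-t}$, which is \eqref{eq:bernstein-edge}.

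Two degenerate cases need separate treatment. If $d_\infty(e)=0$, every $G_e$ is $P_h$-a.s.\ constant, so each $\xi_{h,i}=0$ a.s. and the probability is $0$. If instead $d_2(e)=0$ but $d_\infty(e)>0$, each $\xi_{h,i}$ again vanishes a.s., so the bound holds trivially.

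The last remark in the lemma, that no independence across edges is required, is immediate. The statement concerns a single fixed edge, and independence is used only across queries, not across the random variables indexed by different edges.

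The only delicate point is the conditioning. The edge, the weights $\wh\pi_h$ and the counts $n_h$ must all be fixed, and the audit queries must keep their iid law $P_h$ given all of them. This is exactly what Assumption~\ref{ass:audit} (label-ignorable counts) and Assumption~\ref{ass:unlabelled} (independence of the unlabelled sample) provide. With that conditioning in place, the statement reduces to a textbook Bernstein inequality.
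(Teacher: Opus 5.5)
Your proposal is correct and is essentially the paper's proof. It uses the same decomposition into conditionally independent centered summands with total variance $d_2^2(e)$ and one-sided bound $d_\infty(e)$, followed by the sub-gamma Bernstein tail with scale $d_\infty(e)/3$. You derive that tail from Bennett's moment-generating-function bound and $e^x-1-x\le x^2/(2(1-x/3))$, whereas the paper quotes Theorem~2.10 of \citet{BoucheronLugosiMassart13} with its $c=b/3$ remark; the two are equivalent. One small imprecision: $d_\infty(e)=0$ only forces $G_e$ to be $P_h$-a.s.\ constant in strata with $\wh\pi_h>0$, but the summands vanish either way, so your conclusion stands.
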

\begin{proof}
Since $D_U-D_V=G_{U,V}$, we have
\[
 Z_U-Z_V=\sum_{h\in\SSS}\sum_{i=1}^{n_h}
 \frac{\wh\pi_h}{n_h}\left(\EE_h G_e-G_e(Q_{h,i})\right).
\]
The summands are conditionally independent and centered. Their total
conditional variance equals $d_2^2(e)$. Since $\EE_hG_e$ lies in the
essential range of $G_e$ under $P_h$, each centered summand is almost
surely at most
$(\wh\pi_h/n_h)\osc_h(G_e)\le d_\infty(e)$.
By Theorem~2.10 of \citet{BoucheronLugosiMassart13}, if
$X_1,\ldots,X_n$ are independent, $\sum_i\EE X_i^2\le v$ and
$\sum_i\EE[(X_i)_+^q]\le q!\,v\,c^{q-2}/2$ for all integers $q\ge3$,
then $\PP\{\sum_i(X_i-\EE X_i)\ge\sqrt{2vt}+ct\}\le e^{-t}$ for all
$t>0$. As noted in \citet[p.~38]{BoucheronLugosiMassart13}, right after
their Corollary~2.11, variables with $X_i\le b$ almost surely satisfy
these conditions with $v=\sum_i\EE X_i^2$ and $c=b/3$. Apply this to
the centered summands above, for which $\sum_i\EE X_i^2=d_2^2(e)$ and
$b=d_\infty(e)$; this gives \eqref{eq:bernstein-edge}. If
$d_\infty(e)=0$ or $d_2^2(e)=0$, the summands vanish almost surely and
the claim is trivial.
\end{proof}

\begin{remark}[On the linear constant]
The classical tail form
\[
 \PP\{Z_U-Z_V\ge u\}\le\exp\bigl\{-u^2/\bigl(2[d_2^2(e)+d_\infty(e)u/3]\bigr)\bigr\},
\]
inequality~(2.10) of \citet{BoucheronLugosiMassart13}, is usually
inverted to the weaker radius
$\sqrt{2d_2^2(e)t}+\tfrac23d_\infty(e)t$. The constant $\tfrac13$ in
\eqref{eq:bernstein-edge} comes from the sub-gamma form cited above; it
is the same constant that appears in Bennett's inequality as stated in
\citet[Theorem~3]{MaurerPontil09}. It is used throughout.
\end{remark}

\begin{remark}[Why the essential range still matters]
Low variance alone does not eliminate the Bernstein linear term. A profile
may disagree with another only on rare queries, yet a rare observation can
produce a large jump. A deterministic envelope, an externally certified
uniform approximation, or an explicitly valid bound on $\osc_h(G_e)$ is
needed to shrink the linear term; numerical proximity of embeddings is not
such a certificate.
\end{remark}

%% =====================================================================
\section{A simultaneous localized multiscale inequality}\label{sec:chain}
%% =====================================================================

\begin{definition}[Pre-audit comparison network]\label{def:network}
Given $\CC$ and $\wh S$, fix an integer $L\ge1$, finite nonempty
sets of document selections $\mathcal N_0,\ldots,\mathcal N_L\subseteq\CC$ and mappings
$\Pi_\ell:\CC\to\mathcal N_\ell$ such that
\begin{equation}\label{eq:network}
 \mathcal N_0=\{\wh S\},\quad \Pi_0(T)=\wh S,
 \quad \Pi_L(T)=T\quad\text{for every }T\in\CC.
\end{equation}
The network sets and maps must be measurable with respect to the pre-audit
information (which may include independent unlabelled queries). Define
$e_\ell(T)=(\Pi_\ell(T),\Pi_{\ell-1}(T))$ and let
\begin{equation}\label{eq:edge-sets}
 \EEe_\ell=\{e_\ell(T):T\in\CC,\ \Pi_\ell(T)\ne\Pi_{\ell-1}(T)\},
 \qquad \EEe=\bigcup_{\ell=1}^L\EEe_\ell.
\end{equation}
Here the sets count \emph{distinct directed edges}, not paths: within
a level, an edge shared by many candidates is counted once. An edge that
appears at several levels is counted in each corresponding $\EEe_\ell$,
because the union bound in Theorem~\ref{th:chain} is allocated level by
level; Proposition~\ref{prop:weighted-edges} below avoids this repeated
charge by a single union bound over distinct edges. For convenience levels with $\EEe_\ell=\varnothing$ can be removed. We
assume the displayed levels are nonempty.
\end{definition}

The identity $Z_{\wh S}=0$ gives the exact telescoping representation
\begin{equation}\label{eq:telescoping}
 Z_T=\sum_{\ell=1}^L\left[Z_{\Pi_\ell(T)}-Z_{\Pi_{\ell-1}(T)}\right].
\end{equation}
No probabilistic approximation is involved in this equality.

\begin{theorem}[Conditional simultaneous multiscale Bernstein bound]\label{th:chain}
Assume the model of Section~\ref{sec:model}. Choose numbers
$\alpha_1,\ldots,\alpha_L>0$ with
$\sum_{\ell=1}^L\alpha_\ell=\alpha_a<1$ and let
\begin{equation}\label{eq:tlevel}
 t_\ell=\log\frac{|\EEe_\ell|}{\alpha_\ell}.
\end{equation}
Suppose that for all distinct edges $e\in\EEe$ there are nonnegative,
possibly audit-data-dependent random variables $\ov v_e,\ov b_e$ such that
\begin{equation}\label{eq:edge-calibration}
 \PP\left\{\forall e\in\EEe:\;
 d_2^2(e)\le\ov v_e,\quad d_\infty(e)\le\ov b_e
 \ \middle|\ \text{pre-audit information, counts}\right\}\ge1-\alpha_v.
\end{equation}
Define the path-dependent radius
\begin{equation}\label{eq:radius}
 \mathfrak R(T)=\sum_{\ell=1}^L
 \ind\{e_\ell(T)\in\EEe_\ell\}
 \left\{\sqrt{2\ov v_{e_\ell(T)}t_\ell}
       +\frac13\ov b_{e_\ell(T)}t_\ell\right\}.
\end{equation}
Then, conditionally on the pre-audit information and counts,
\begin{equation}\label{eq:main-ineq}
 \PP\left\{\forall T\in\CC:\ Z_T\le\mathfrak R(T)
 \ \middle|\ \text{pre-audit information, counts}\right\}
 \ge1-\alpha_a-\alpha_v.
\end{equation}
The result remains valid when $\CC$, $\wh S$, and the network are
arbitrarily data-dependent through the independent pilot and unlabelled
information.
\end{theorem}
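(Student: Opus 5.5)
The plan is to combine the single-edge bound of Lemma~\ref{lem:increment} with a level-by-level union bound over distinct edges. I then intersect with the calibration event \eqref{eq:edge-calibration} and sum along the telescoping representation \eqref{eq:telescoping}. Throughout I work conditionally on the pre-audit information and counts. Under this conditioning $\CC$, $\wh S$, the networks $\mathcal N_\ell$, the maps $\Pi_\ell$, the edge sets $\EEe_\ell$, and hence the thresholds $t_\ell$ are all fixed. By Assumption~\ref{ass:audit}, the audit queries retain their stratified iid law, so Lemma~\ref{lem:increment} applies to every fixed edge. This is the only point where data-dependence of the network enters, and it is harmless precisely because the network is pre-audit measurable. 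The final statement about arbitrary data dependence then follows, since the conditional bound holds for every realization.

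First, for each level $\ell$ and each distinct edge $e=(U,V)\in\EEe_\ell$, I apply Lemma~\ref{lem:increment} with $t=t_\ell$. This gives
\[
\PP\bigl\{Z_U-Z_V>\sqrt{2d_2^2(e)t_\ell}+\tfrac13 d_\infty(e)t_\ell\bigr\}\le e^{-t_\ell}=\alpha_\ell/|\EEe_\ell|.
\]
A union bound over the finitely many distinct edges in $\EEe_\ell$, and then over $\ell=1,\ldots,L$, shows that the event $\mathcal G_a$ on which all these edge inequalities hold simultaneously has conditional probability at least $1-\sum_\ell\alpha_\ell=1-\alpha_a$. No independence across edges is needed. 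Each $\EEe_\ell$ is nonempty by convention, so $t_\ell$ is well defined, though possibly negative when $\alpha_\ell>|\EEe_\ell|$. In that case $e^{-t_\ell}>1$ and the bound is trivially true; I would note that Lemma~\ref{lem:increment} needs $t>0$, but this case is covered by the trivial probability bound since $\alpha_\ell\ge|\EEe_\ell|$ makes the union-bound term at least $1$.

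Second, let $\mathcal G_v$ denote the calibration event in \eqref{eq:edge-calibration}, of conditional probability at least $1-\alpha_v$. On $\mathcal G_a\cap\mathcal G_v$, whose probability is at least $1-\alpha_a-\alpha_v$, monotonicity of $x\mapsto\sqrt{2xt_\ell}+\tfrac13 yt_\ell$ in $(x,y)$ for $t_\ell\ge0$ lets me replace $d_2^2(e)$ and $d_\infty(e)$ by $\ov v_e$ and $\ov b_e$. The randomness of $\ov v_e,\ov b_e$ through audit data causes no trouble: the substitution is deterministic on the intersection event, and no probability statement is made about the replaced quantities. Third, I fix $T\in\CC$ and use \eqref{eq:telescoping}. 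For levels with $\Pi_\ell(T)=\Pi_{\ell-1}(T)$ the increment is exactly $0$, which matches the vanishing indicator in \eqref{eq:radius}. Otherwise $e_\ell(T)\in\EEe_\ell$ and the level-$\ell$ edge bound applies. Summing over $\ell$ gives $Z_T\le\mathfrak R(T)$ simultaneously for all $T$, because the events were indexed by edges rather than by candidates.

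The main obstacle is bookkeeping rather than analysis, specifically making sure the union bound is charged over distinct edges per level while every candidate's path is covered. Edges recurring across levels are counted separately in each level, which is exactly why $\alpha_\ell$ is charged per level. The other subtle point is the conditioning argument that legitimizes applying Lemma~\ref{lem:increment} to data-chosen edges. I would state it explicitly as: the network is a measurable function of information independent of the audit labels given the counts.
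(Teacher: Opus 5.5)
Your proposal is correct and follows essentially the same route as the paper: you apply Lemma~\ref{lem:increment} edgewise at $t=t_\ell$, take union bounds over the distinct edges within each level and then over levels, intersect with the calibration event, substitute $\ov v_e,\ov b_e$ by monotonicity, and sum along the telescoping paths. Your digression on negative $t_\ell$ is unnecessary, since $\alpha_\ell\le\alpha_a<1\le|\EEe_\ell|$ forces $t_\ell>0$.
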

\begin{proof}
For every level $\ell$ and every $e=(U,V)\in\EEe_\ell$, apply
Lemma~\ref{lem:increment} with $t=t_\ell$. Its one-sided failure
probability is bounded by
$e^{-t_\ell}=\alpha_\ell/|\EEe_\ell|$.
A union bound over the distinct edges in $\EEe_\ell$ gives failure
probability at most $\alpha_\ell$; a second union bound over levels gives
a joint event $\Omega_a$ of probability at least $1-\alpha_a$ on which
\[
 Z_U-Z_V\le\sqrt{2d_2^2(e)t_\ell}
             +\frac13d_\infty(e)t_\ell
\quad\text{for every }e=(U,V)\in\EEe_\ell,\ \ell=1,\ldots,L.
\]
On $\Omega_a$, use the telescoping identity \eqref{eq:telescoping}
for each $T\in\CC$ and add the corresponding edge inequalities.
Intersect with the calibration event in \eqref{eq:edge-calibration}.
The monotonicity of the square root and of the linear term permits
replacement of each unknown $d_2^2(e),d_\infty(e)$ by
$\ov v_e,\ov b_e$ without any independence assumption between
$\Omega_a$ and the calibration event. A final union bound proves
\eqref{eq:main-ineq}. Conditioning is legitimate because every edge and
its allocated failure probability were fixed without examining audit
relevance values.
\end{proof}

\begin{proposition}[A weighted, unique-edge simultaneous inequality]
\label{prop:weighted-edges}
Let $\EEe=\bigcup_{\ell=1}^L\EEe_\ell$ be the set of
\emph{distinct directed edges} of a pre-audit network as in
Definition~\ref{def:network}, and let $0<\alpha_a<1$. Conditionally on
the pre-audit information and counts, choose weights $\omega_e>0$
satisfying $\sum_{e\in\EEe}\omega_e\le1$. The weights may depend on the
pilot, the unlabelled observations and the counts, but not on audit
relevance labels. Put
\begin{equation}\label{eq:weighted-t}
 t_e=\log\frac{1}{\alpha_a\omega_e},\qquad e\in\EEe.
\end{equation}
Suppose that the simultaneous edge calibration event
\eqref{eq:edge-calibration} holds with conditional failure probability
at most $\alpha_v$, and define
\begin{equation}\label{eq:weighted-radius}
 \mathfrak R_\omega(T)=\sum_{\ell=1}^L
 \ind\{e_\ell(T)\in\EEe_\ell\}
 \left\{\sqrt{2\ov v_{e_\ell(T)}t_{e_\ell(T)}}
          +\frac13\ov b_{e_\ell(T)}t_{e_\ell(T)}\right\}.
\end{equation}
Then, conditionally on the pre-audit information and counts,
\[
 \PP\left\{\forall T\in\CC:\ Z_T\le\mathfrak R_\omega(T)
     \ \middle|\ \text{pre-audit information, counts}\right\}
 \ge1-\alpha_a-\alpha_v.
\]
The same event bounds all candidate paths, even if a directed edge
appears at more than one level.
\end{proposition}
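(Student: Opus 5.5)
The argument follows the proof of Theorem~\ref{th:chain}, with the level-by-level union bound replaced by a single union bound over the distinct directed edges in $\EEe$.

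First, condition on the pre-audit information and counts. The set $\EEe$, the weights $\omega_e$ and hence the levels $t_e=\log\{1/(\alpha_a\omega_e)\}$ are then all fixed: none of them depends on audit relevance labels. Since $\omega_e\le1$ and $\alpha_a<1$, we have $t_e>0$. For each $e=(U,V)\in\EEe$, Lemma~\ref{lem:increment} with $t=t_e$ gives
\[
\PP\bigl\{Z_U-Z_V>\sqrt{2d_2^2(e)t_e}+\tfrac13 d_\infty(e)t_e\bigr\}\le e^{-t_e}=\alpha_a\omega_e .
\]
A union bound over $e\in\EEe$ yields total failure probability at most $\alpha_a\sum_e\omega_e\le\alpha_a$. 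Let $\Omega_a$ denote the complementary event, on which every distinct edge satisfies its own inequality. Each edge is charged only once, so $\Omega_a$ controls that edge wherever it appears, whether in one path or several and at one level or several.

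Second, intersect $\Omega_a$ with the calibration event of \eqref{eq:edge-calibration}. The intersection has conditional probability at least $1-\alpha_a-\alpha_v$, and no independence between the two events is needed. On it, monotonicity of $v\mapsto\sqrt{2vt}$ and of $b\mapsto bt/3$ gives $Z_U-Z_V\le\sqrt{2\ov v_e t_e}+\tfrac13\ov b_e t_e$ for every $e\in\EEe$.

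Third, fix $T\in\CC$ and use the telescoping identity \eqref{eq:telescoping}. If $\Pi_\ell(T)=\Pi_{\ell-1}(T)$, the $\ell$th summand vanishes, which matches the indicator in \eqref{eq:weighted-radius}. Otherwise $e_\ell(T)\in\EEe_\ell\subseteq\EEe$, and the summand is bounded by the edge inequality for $e_\ell(T)$. The bound uses the edge-specific level $t_{e_\ell(T)}$, which does not depend on $\ell$. Summing over $\ell$ gives $Z_T\le\mathfrak R_\omega(T)$ for all $T$ simultaneously on the same event.

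The only delicate point is measurability. The weights may be data-dependent, so Lemma~\ref{lem:increment} must be applied conditionally on the pre-audit information and counts, where $\omega_e$ and $t_e$ are constants. The conditional bound then holds as stated, and an unconditional version follows by averaging.
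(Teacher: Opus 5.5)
Your proposal is correct and follows the paper's proof essentially step for step. Both arguments apply Lemma~\ref{lem:increment} at the edge-specific level $t_e$, which is positive because $\alpha_a\omega_e<1$, and take a single union bound over the distinct edges in $\EEe$ costing at most $\alpha_a$. Both then intersect with the calibration event without assuming independence and sum the edge inequalities along each telescoping path.
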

\begin{proof}
Since $\alpha_a\omega_e<1$, each $t_e$ is positive. For each distinct
edge $e=(U,V)\in\EEe$, apply Lemma~\ref{lem:increment} at $t=t_e$. Its
conditional failure probability is at most
$e^{-t_e}=\alpha_a\omega_e$. A \emph{single} union bound over
$e\in\EEe$ therefore has failure probability at most
$\alpha_a\sum_e\omega_e\le\alpha_a$.
On its complement every edge inequality holds, regardless of
how many times that edge occurs in the telescoping paths.
Intersect with the simultaneous calibration event and sum along each
path exactly as in Theorem~\ref{th:chain}. No independence between
these events is needed.
\end{proof}

\begin{remark}[Uniform levels, repeated edges, and pre-audit tuning]
If every edge appears at only one level, the allocation
$\omega_e=\alpha_\ell/(\alpha_a|\EEe_\ell|)$ for $e\in\EEe_\ell$ gives
$t_e=t_\ell$ and recovers Theorem~\ref{th:chain}. For repeated edges, let
\[
 \omega_e=\sum_{\ell:\,e\in\EEe_\ell}
      \frac{\alpha_\ell}{\alpha_a|\EEe_\ell|}.
\]
These weights sum to $1$, and $t_e\le t_\ell$ whenever
$e\in\EEe_\ell$. Consequently, the weighted radius
is \emph{never larger} than the levelwise radius \eqref{eq:radius} for
this allocation, and it can be strictly smaller on paths with repeated
edges. Other weights may prioritize difficult or frequently used
edges, but they must be fixed without the audit relevance labels:
optimizing them retrospectively from the audit is not covered by
this proposition.
\end{remark}

\begin{remark}[The role of localization]
The radius in \eqref{eq:radius} is \emph{pathwise localized}: each $T$
pays only for increments along its own approximation path, and an edge
with negligible variance or range contributes little. Moreover, the
pilot-screened family $\CC$ can be much smaller than $\HH_B$.
This does not establish a universal entropy integral or a new generic
chaining inequality; the generic mixed-tail background is developed,
for example, by \citet{Dirksen15}.
\end{remark}

%% =====================================================================
\section{Observable calibration of all edge variances}\label{sec:variance}
%% =====================================================================

Theorem~\ref{th:chain} must not rely on oracle variances. We now provide
explicit, fully observable choices of $\ov v_e$ with separate statistical
error control. Recall the unbiased sample variance
\[
 s_n^2(y_1,\ldots,y_n)
 =\frac1{n-1}\sum_{i=1}^n(y_i-\ov y)^2.
\]

\begin{lemma}[Empirical Bernstein inequality for a single mean]
\label{lem:empbern}
Let $Y_1,\ldots,Y_n$ be iid in $[a,b]$ for $n\ge2$ and let
$s_n^2$ be their unbiased sample variance. For $0<\delta<1$,
\begin{equation}\label{eq:empbern}
 \PP\left\{\EE Y-\ov Y>
 \sqrt{\frac{2s_n^2\log(2/\delta)}n}
 +\frac{7(b-a)\log(2/\delta)}{3(n-1)}\right\}\le\delta.
\end{equation}
The same assertion holds with $\ov Y-\EE Y$ in place of
$\EE Y-\ov Y$.
\end{lemma}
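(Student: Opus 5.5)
This is the Maurer--Pontil empirical Bernstein bound, and I would derive it from Bennett's inequality combined with a concentration bound for the sample standard deviation. By the affine change $Y\mapsto (Y-a)/(b-a)$ I reduce to $[a,b]=[0,1]$. Both sides scale consistently: $s_n^2$ scales by $(b-a)^2$, its square root and the linear term by $(b-a)$, and the case $b=a$ is trivial. Write $\sigma^2=\VV(Y)$ and $L=\log(2/\delta)$.

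\textbf{Step 1 (Bennett with the true variance).} I apply the sub-gamma bound already invoked in Lemma~\ref{lem:increment}, Theorem~2.10 of \citet{BoucheronLugosiMassart13} with $c=b/3$. The summands are $(\EE Y-Y_i)/n$, bounded above by $1/n$, with total variance $\sigma^2/n$. This gives
\[
\PP\Bigl\{\EE Y-\ov Y>\sqrt{2\sigma^2 t/n}+t/(3n)\Bigr\}\le e^{-t}.
\]
I use it at $t=L$, so the failure probability is $\delta/2$.

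\textbf{Step 2 (upper confidence bound for $\sigma$).} I show that, with probability at least $1-\delta/2$,
\[
\sigma\le s_n+\sqrt{2L/(n-1)}.
\]
For this I would cite Theorem~10 of \citet{MaurerPontil09}, which applies to variables in $[0,1]$. That result writes $s_n^2$ as a U-statistic, $\frac{1}{n(n-1)}\sum_{i<j}(Y_i-Y_j)^2$. It then notes that $n s_n^2$ is a self-bounding type function, and applies the lower-tail concentration for self-bounding functions \citep{BoucheronLugosiMassart13}. The result is the one-sided bound $\PP\{\sigma>s_n+\sqrt{2\log(1/\delta')/(n-1)}\}\le\delta'$, used here with $\delta'=\delta/2$. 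This step is the main obstacle if it must be reproved rather than cited. The delicate points are verifying the self-bounding constants and passing from the tail of $s_n^2$ to that of $s_n$ via $\sqrt{x+y}\le\sqrt x+\sqrt y$.

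\textbf{Step 3 (combination).} On the intersection of the two good events, which by a union bound has probability at least $1-\delta$, I substitute Step 2 into Step 1:
\[
\sqrt{2\sigma^2L/n}\le\sqrt{2s_n^2L/n}+\frac{2L}{\sqrt{n(n-1)}}\le\sqrt{2s_n^2L/n}+\frac{2L}{n-1}.
\]
Adding the Bennett linear term $L/(3n)\le L/(3(n-1))$ gives a total linear coefficient of $2+\tfrac13=\tfrac73$. After undoing the scaling this is exactly \eqref{eq:empbern}.

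\textbf{Symmetric statement.} The assertion for $\ov Y-\EE Y$ follows by applying the same argument to $Y'_i=a+b-Y_i$. These variables are iid in $[a,b]$ with the same sample variance, and they reverse the sign of the deviation.
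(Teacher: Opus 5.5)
Your proposal is correct and follows the paper's route: rescale to $[0,1]$, invoke Maurer--Pontil, and reflect the observations for the opposite tail. The only difference is that the paper cites Theorem~4 of \citet{MaurerPontil09} directly, whereas you re-derive it from its ingredients, which is exactly how Maurer and Pontil prove it: Bennett's inequality at level $\delta/2$, the lower-tail bound of their Theorem~10 for $\sqrt{\EE V_n}$ (which equals $\sigma$ in the iid case), and a union bound.
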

\begin{proof}
For $a<b$, apply Theorem~4 of \citet{MaurerPontil09} to the rescaled
observations $(Y_i-a)/(b-a)\in[0,1]$, then multiply the resulting
inequality by $b-a$. The sample variance
$V_n=\{n(n-1)\}^{-1}\sum_{i<j}(Z_i-Z_j)^2$ used by
\citet{MaurerPontil09} coincides with the unbiased sample variance
$s_n^2$, and it rescales by $(b-a)^2$. Apply the same theorem to $(b-Y_i)/(b-a)$ for the opposite
one-sided deviation. If $a=b$ the deviations vanish identically.
\end{proof}

\begin{assumption}[Pre-audit residual predictors]\label{ass:pred}
For every stratum $h$ and every distinct edge $e\in\EEe$, there is an
$\mathscr F_0$- and/or unlabelled-sample-measurable predictor
$m_{h,e}\in[-1,1]$. It is fixed without using the audit relevance labels.
\end{assumption}

Let
\begin{equation}\label{eq:residual-notation}
 Y_{h,i,e}=\big(G_e(Q_{h,i})-m_{h,e}\big)^2,\quad
 M_{h,e}=(1+|m_{h,e}|)^2,\quad
 \ov Y_{h,e}=\frac1{n_h}\sum_iY_{h,i,e},
\end{equation}
and write $s_{Y,h,e}^2$ for the sample variance of the $Y$'s.
Since $G_e\in[-1,1]$, $0\le Y_{h,i,e}\le M_{h,e}\le4$.

\begin{lemma}[Uniform residual-second-moment upper bounds]
\label{lem:residual}
Let $E=|\EEe|\ge1$. Set
\[
 t_v=\log\frac{2HE}{\alpha_v},\qquad
 v^+_{h,e}=\min\left\{1,\ \ov Y_{h,e}
       +\sqrt{\frac{2s_{Y,h,e}^2t_v}{n_h}}
       +\frac{7M_{h,e}t_v}{3(n_h-1)}\right\}.
\]
Under Assumptions~\ref{ass:audit} and~\ref{ass:pred},
\begin{equation}\label{eq:vplus-valid}
 \PP\left\{\forall h,e:\quad v_{h,e}\le v^+_{h,e}
 \ \middle|\ \text{pre-audit information, counts}\right\}
 \ge1-\alpha_v.
\end{equation}
\end{lemma}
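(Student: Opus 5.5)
The plan has three steps: reduce the variance to a mean of squared residuals, bound that mean by the empirical Bernstein inequality of Lemma~\ref{lem:empbern}, and take a union bound over strata and edges.

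\textbf{Reduction to a second moment.} For any constant $m$, the conditional variance is the smallest mean squared deviation:
\[
 v_{h,e}=\EE_h(G_e-\EE_hG_e)^2\le\EE_h(G_e-m)^2 .
\]
Condition on the pre-audit information and the counts. By Assumption~\ref{ass:pred}, $m_{h,e}$ is then fixed, so
\[
 v_{h,e}\le\EE_h Y_{h,1,e}=:\theta_{h,e}.
\]
It therefore suffices to give an upper confidence bound for the mean $\theta_{h,e}$.

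\textbf{Single-pair bound.} Fix a pair $(h,e)$. By Assumption~\ref{ass:audit}, the variables $Y_{h,1,e},\ldots,Y_{h,n_h,e}$ are conditionally iid. This uses that $e$ and $m_{h,e}$ are pre-audit measurable, and that the counts carry no label information. The variables take values in $[0,M_{h,e}]$: since $|G_e|\le1$, we have $|G_e-m_{h,e}|\le1+|m_{h,e}|$. Also $n_h\ge2$. Apply Lemma~\ref{lem:empbern} with $[a,b]=[0,M_{h,e}]$ and $\delta=\alpha_v/(HE)$, so that $\log(2/\delta)=t_v$. With conditional probability at least $1-\alpha_v/(HE)$,
\[
 \theta_{h,e}\le\ov Y_{h,e}+\sqrt{2s_{Y,h,e}^2t_v/n_h}+7M_{h,e}t_v/\{3(n_h-1)\}.
\]

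\textbf{Truncation and union bound.} Truncating at $1$ is harmless, because $v_{h,e}\le1$ deterministically: $G_e$ lies in an interval of length $2$, so its variance is at most $1$. On the event of the single-pair bound, both candidates in the minimum are therefore upper bounds for $v_{h,e}$, which gives $v_{h,e}\le v^+_{h,e}$. A union bound over the $HE$ pairs $(h,e)$ then gives \eqref{eq:vplus-valid}. No independence across edges or strata is required.

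The main point needing care is the conditioning. The edge set $\EEe$, its cardinality $E$ (which enters $t_v$), and the predictors $m_{h,e}$ must all be fixed before the audit labels are seen; otherwise the iid structure used in Lemma~\ref{lem:empbern} fails. Assumptions~\ref{ass:audit} and~\ref{ass:pred} guarantee exactly this. One must also check the degenerate case $M_{h,e}$ with zero range, but $M_{h,e}\ge1$, so it does not arise. The case $a=b$ in Lemma~\ref{lem:empbern} is in any event already handled there.
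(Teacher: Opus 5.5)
Your proposal is correct and follows the paper's proof essentially step for step. You use the bias--variance bound $v_{h,e}\le\EE_h(G_e-m_{h,e})^2$, apply the one-sided empirical Bernstein inequality of Lemma~\ref{lem:empbern} to the residual squares with range $M_{h,e}$ and $\delta=\alpha_v/(HE)$, take a union bound over the $HE$ pairs, and invoke Popoviciu's bound $v_{h,e}\le1$ to justify the truncation, exactly as the paper does.
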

\begin{proof}
For each fixed $h,e$, the bias--variance decomposition gives
\[
 \EE_h(G_e-m_{h,e})^2
 =\VV_h(G_e)+(\EE_hG_e-m_{h,e})^2\ge v_{h,e}.
\]
The predictor is fixed under the conditioning, so the residual squares
are iid and bounded by $M_{h,e}$. Apply the one-sided empirical
Bernstein inequality of Lemma~\ref{lem:empbern} (equivalently,
Corollary~5 of \citet{MaurerPontil09} after rescaling) with
$\delta=\alpha_v/(HE)$. Since
$\log(2/\delta)=t_v$, the displayed empirical upper expression bounds
$\EE_hY_{h,i,e}$ except on an event of probability at most
$\alpha_v/(HE)$. Union over all $HE$ pairs. Finally, the separate
Popoviciu bound $v_{h,e}\le(2)^2/4=1$ justifies the minimum with $1$.
\end{proof}

\begin{theorem}[Observable simultaneous chaining radius]\label{th:empirical}
Under Assumptions~\ref{ass:audit} and~\ref{ass:pred}, let
$\bar b_{h,e}\in[0,2]$ be known pre-audit constants satisfying
$\osc_h(G_e)\le\bar b_{h,e}$. The universally valid choice is
$\bar b_{h,e}=2$. Put
\begin{equation}\label{eq:observable-edge}
 \ov v_e=\sum_h\frac{\wh\pi_h^2}{n_h}v^+_{h,e},\qquad
 \ov b_e=\max_h\frac{\wh\pi_h}{n_h}\bar b_{h,e}.
\end{equation}
Then the radius \eqref{eq:radius} is observable from the pilot,
unlabelled sample and acquired audit labels, and conclusion
\eqref{eq:main-ineq} holds with probability at least
$1-\alpha_a-\alpha_v$.
\end{theorem}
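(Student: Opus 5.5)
The plan is to reduce the statement to Theorem~\ref{th:chain}. It suffices to check that the choices \eqref{eq:observable-edge} satisfy the calibration requirement \eqref{eq:edge-calibration} with conditional failure probability at most $\alpha_v$. We also need the resulting radius $\mathfrak R(T)$ to be a function of observed quantities only.

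\emph{Observability.} The counts $n_h$ and the estimated weights $\wh\pi_h$ are observed; the latter come from the unlabelled sample under Assumption~\ref{ass:unlabelled}. The network, the edge sets $\EEe_\ell$, the levels $t_\ell$ and the constants $\bar b_{h,e}$ are all fixed before the audit. By Assumption~\ref{ass:pred}, the predictors $m_{h,e}$ are pre-audit quantities. By Assumption~\ref{ass:audit}, every $G_e(Q_{h,i})$ can be computed from the acquired labels, since each edge joins two members of $\CC$. Hence $Y_{h,i,e}$, $\ov Y_{h,e}$, $s^2_{Y,h,e}$, and therefore $v^+_{h,e}$, $\ov v_e$ and $\ov b_e$, are observable, and so is $\mathfrak R(T)$.

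\emph{Calibration.} Work conditionally on the pre-audit information and the counts, so that each $\wh\pi_h$ is a fixed nonnegative number. Let $\Omega_v$ be the event in \eqref{eq:vplus-valid}; by Lemma~\ref{lem:residual} its conditional probability is at least $1-\alpha_v$. On $\Omega_v$ we have $v_{h,e}\le v^+_{h,e}$ for all $h$ and $e$. The coefficients $\wh\pi_h^2/n_h$ are nonnegative, so summing gives $d_2^2(e)\le\ov v_e$ for every $e\in\EEe$, by \eqref{eq:d2}. The range bound holds deterministically: $\osc_h(G_e)\le\bar b_{h,e}$ and $\wh\pi_h/n_h\ge0$, so taking the maximum over $h$ in \eqref{eq:dinfty} gives $d_\infty(e)\le\ov b_e$. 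Thus $\Omega_v$ is contained in the calibration event of \eqref{eq:edge-calibration}, and that event has conditional probability at least $1-\alpha_v$. The universal choice $\bar b_{h,e}=2$ is valid because $G_e$ takes values in $[-1,1]$.

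\emph{Conclusion.} Apply Theorem~\ref{th:chain} with these $\ov v_e$ and $\ov b_e$. Its proof needs no independence between the Bernstein event $\Omega_a$ and the calibration event, and it permits audit-dependent upper bounds. Together these give \eqref{eq:main-ineq} with probability at least $1-\alpha_a-\alpha_v$.

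The only delicate point, which I expect to be the main obstacle, is the order of conditioning. Both Lemma~\ref{lem:residual} and Lemma~\ref{lem:increment} must hold under the same conditioning on the pre-audit information and the counts. The following must all be fixed before the audit labels are seen: the edge set $\EEe$ used in $t_v$ (through $E=|\EEe|$), the predictors $m_{h,e}$, and the constants $\bar b_{h,e}$. Without this, the union bounds over edges and pairs $(h,e)$ would be taken over random, label-dependent families and would no longer be valid.
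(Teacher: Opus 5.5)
Your proposal is correct and follows the paper's own proof: you check the calibration requirement \eqref{eq:edge-calibration} of Theorem~\ref{th:chain} through Lemma~\ref{lem:residual}, summed with the nonnegative weights $\wh\pi_h^2/n_h$, together with the deterministic oscillation bounds, and then invoke Theorem~\ref{th:chain}. Your explicit observability check and your remark that $\EEe$, $m_{h,e}$ and $\bar b_{h,e}$ must be fixed before the audit spell out what the paper's three-line proof leaves implicit.
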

\begin{proof}
By Lemma~\ref{lem:residual}, the collection of variance bounds in
\eqref{eq:observable-edge} dominates $d_2^2(e)$ simultaneously with
conditional probability at least $1-\alpha_v$. The deterministic
oscillation bounds imply $d_\infty(e)\le\ov b_e$ without additional
probability loss. All requirements of Theorem~\ref{th:chain} are satisfied.
\end{proof}

\begin{remark}[No hidden additional labeled sample]
The $v^+_{h,e}$ may be estimated on the same audited labels used to form
$\ov D_{h,T}$. The argument does not condition on the estimated variances;
instead it intersects a Bernstein event involving the true variances with
an empirical calibration event that holds simultaneously. This avoids an
incorrect independence claim and does not double the annotation budget.
\end{remark}

\begin{remark}[A predictor-free alternative]
Assumption~\ref{ass:pred} can be dropped at the price of a different
constant. For iid observations in $[a,b]$, inequality~(3) in
Theorem~10 of \citet{MaurerPontil09}, applied after rescaling to
$[0,1]$, gives
\[
 \PP\Bigl\{\sigma>s_n+(b-a)\sqrt{2\log(1/\delta)/(n-1)}\Bigr\}\le\delta,
\]
where
$\sigma^2$ is the variance and $s_n$ the unbiased sample standard
deviation. (Their theorem is stated for $\sqrt{\EE V_n}$; in the iid
case the expected unbiased sample variance $\EE V_n$ equals
$\sigma^2$.) With $G_e\in[-1,1]$ and $\delta=\alpha_v/(HE)$, the choice
\[
 v^+_{h,e}=\min\Bigl\{1,\Bigl(s_{h,e}+2\sqrt{2\log(HE/\alpha_v)/(n_h-1)}\Bigr)^2\Bigr\},
\]
where $s_{h,e}$ is the sample standard deviation of the
$G_e(Q_{h,i})$, satisfies \eqref{eq:vplus-valid} without any
pre-audit predictor. Both forms have the same orders in $n_h$, namely
terms of order $n_h^{-1/2}$ scaled by an empirical standard deviation and
terms of order $n_h^{-1}$. They differ in their constants and in the
squared bias $(\EE_hG_e-m_{h,e})^2$, which the residual form pays and the
predictor-free form avoids; which one is tighter depends on the data and
on the quality of the predictor.
\end{remark}

\begin{theorem}[One-stage empirical-Bernstein multiscale certificate]
\label{th:direct-empirical}
Assume the stratified, label-ignorable audit design
(Assumption~\ref{ass:audit}) and a pre-audit network as in
Definition~\ref{def:network}. For every $h$ and every distinct directed
edge $e\in\EEe$, let
$\ov G_{h,e}=n_h^{-1}\sum_{i=1}^{n_h}G_e(Q_{h,i})$, and let
$s_{h,e}^2$ be the unbiased sample variance of
$G_e(Q_{h,1}),\ldots,G_e(Q_{h,n_h})$. Let $\bar b_{h,e}\in[0,2]$ be the
known pre-audit constants of Theorem~\ref{th:empirical}, satisfying
$\osc_h(G_e)\le\bar b_{h,e}$. For $\sum_\ell\alpha_\ell=\alpha_a<1$ set
\begin{equation}\label{eq:one-stage-t}
 t_\ell^{\mathrm{EB}}=\log\frac{2H|\EEe_\ell|}{\alpha_\ell}
\end{equation}
and define
\begin{equation}\label{eq:one-stage-radius}
 \mathfrak R_{\mathrm{EB}}(T)
 =\sum_{\ell=1}^L\ind\{e_\ell(T)\in\EEe_\ell\}
 \sum_{h\in\SSS}\wh\pi_h
 \left\{\sqrt{\frac{2s_{h,e_\ell(T)}^2
                            t_\ell^{\mathrm{EB}}}{n_h}}
      +\frac{7\bar b_{h,e_\ell(T)}t_\ell^{\mathrm{EB}}}{3(n_h-1)}
 \right\}.
\end{equation}
Then, conditionally on the pre-audit information and counts,
\begin{equation}\label{eq:one-stage-bound}
 \PP\left\{\forall T\in\CC:\ Z_T\le
    \mathfrak R_{\mathrm{EB}}(T)
    \ \middle|\ \text{pre-audit information, counts}\right\}
 \ge1-\alpha_a.
\end{equation}
In particular, this certificate requires neither pre-audit
residual predictors (Assumption~\ref{ass:pred}) nor the separate
calibration budget $\alpha_v$.
\end{theorem}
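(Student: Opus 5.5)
The plan is to avoid pooling the strata into one Bernstein bound. Instead, I would apply the single-mean empirical Bernstein inequality of Lemma~\ref{lem:empbern} separately in each stratum and to each distinct edge, and then aggregate with the weights $\wh\pi_h$. Condition throughout on the pre-audit information and counts. Then $\wh\pi_h$, $n_h$, the network, the edge sets $\EEe_\ell$, and the constants $\bar b_{h,e}$ are all fixed. By Assumption~\ref{ass:audit}, for each $h$ and each $e\in\EEe$ the values $G_e(Q_{h,1}),\ldots,G_e(Q_{h,n_h})$ are iid under $P_h$ with $n_h\ge2$.

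First I would secure a valid range. Lemma~\ref{lem:empbern} needs an interval $[a,b]$ of length at most $\bar b_{h,e}$ that contains the observations almost surely. Take $a=\mathop{\mathrm{ess\,inf}}_{P_h}G_e$ and $b=\mathop{\mathrm{ess\,sup}}_{P_h}G_e$. Then $b-a=\osc_h(G_e)\le\bar b_{h,e}$, and the observations lie in $[a,b]$ almost surely, which suffices for iid sampling. The interval is unknown, but only its length enters the radius, so enlarging that length to $\bar b_{h,e}$ keeps the bound valid. Apply the one-sided version bounding $\EE_hG_e-\ov G_{h,e}$ with $\delta=\alpha_\ell/(H|\EEe_\ell|)$, so that $\log(2/\delta)=t_\ell^{\mathrm{EB}}$. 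For $e\in\EEe_\ell$ and each $h$, this gives, outside an event of conditional probability at most $\alpha_\ell/(H|\EEe_\ell|)$,
\[
\EE_hG_e-\ov G_{h,e}\le\sqrt{2s_{h,e}^2t_\ell^{\mathrm{EB}}/n_h}+7\bar b_{h,e}t_\ell^{\mathrm{EB}}/\{3(n_h-1)\}.
\]
A union bound over the $H$ strata and the $|\EEe_\ell|$ edges of level $\ell$ costs $\alpha_\ell$. A second union bound over the levels costs $\alpha_a$ in total, and yields one event $\Omega$ on which all these inequalities hold simultaneously. An edge repeated across levels is simply charged once per level, as in Theorem~\ref{th:chain}.

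On $\Omega$, I would use the representation from the proof of Lemma~\ref{lem:increment}. For $e=(U,V)$,
\[
Z_U-Z_V=\sum_h\wh\pi_h(\EE_hG_e-\ov G_{h,e}).
\]
Because $\wh\pi_h\ge0$, multiplying each stratumwise inequality by $\wh\pi_h$ and summing bounds $Z_U-Z_V$ by the inner sum in \eqref{eq:one-stage-radius} at level $\ell$. Next apply the telescoping identity \eqref{eq:telescoping} to each $T\in\CC$. Levels with $\Pi_\ell(T)=\Pi_{\ell-1}(T)$ contribute zero, which matches the indicator in the radius. Summing over levels then gives $Z_T\le\mathfrak R_{\mathrm{EB}}(T)$ for all $T$ simultaneously on $\Omega$. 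Since the variance proxy $s_{h,e}^2$ appears inside the same inequality, no separate calibration event and no $\alpha_v$ are needed.

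I expect the main obstacle to be the careful justification of the range step together with conditioning. One must check that Lemma~\ref{lem:empbern} only uses $b-a$, so the unknown location of the interval is harmless. One must also check that the network and the choice of $\delta$ are pre-audit measurable, so that each per-edge, per-stratum inequality is a statement about a fixed function applied to an iid sample. The degenerate case $a=b$ is covered by the last sentence of Lemma~\ref{lem:empbern}.
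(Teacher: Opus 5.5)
Your proposal is correct and follows the paper's proof essentially step for step. The shared steps are: applying Lemma~\ref{lem:empbern} stratumwise on the essential-range interval (only its length matters) with $\delta=\alpha_\ell/(H|\EEe_\ell|)$; a union bound over the $H|\EEe_\ell|$ stratum--edge pairs and then over levels; weighting by $\wh\pi_h\ge0$ via $Z_U-Z_V=\sum_h\wh\pi_h(\EE_hG_e-\ov G_{h,e})$; and telescoping through \eqref{eq:telescoping}. Your remark that the indicator in \eqref{eq:one-stage-radius} vanishes exactly on trivial increments is a correct detail that the paper leaves implicit.
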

\begin{proof}
Condition on the pre-audit information and counts, so that the
$\wh\pi_h$ are fixed and, for each $h$ and $e$, the values
$G_e(Q_{h,i})$ are iid. They lie almost surely in the interval
between the essential infimum and supremum of $G_e$ under $P_h$, whose
length is $\osc_h(G_e)\le\bar b_{h,e}$. The endpoints need not be
known, since the bound of Lemma~\ref{lem:empbern} depends on the
interval only through its length and is nondecreasing in it. For each
level $\ell$, edge $e\in\EEe_\ell$ and stratum $h$, apply
Lemma~\ref{lem:empbern} with $\delta=\alpha_\ell/(H|\EEe_\ell|)$, so
that $\log(2/\delta)=t_\ell^{\mathrm{EB}}$. This yields
\[
 \EE_hG_e-\ov G_{h,e}
 \le\sqrt{\frac{2s_{h,e}^2t_\ell^{\mathrm{EB}}}{n_h}}
      +\frac{7\bar b_{h,e}t_\ell^{\mathrm{EB}}}{3(n_h-1)}
\]
outside an event whose conditional probability does not exceed
$\alpha_\ell/(H|\EEe_\ell|)$.
A union bound over the $H|\EEe_\ell|$ stratum--edge pairs at level
$\ell$, and then over levels, gives a single event with
probability at least $1-\alpha_a$. For $e=(U,V)$ we have
\[
 Z_U-Z_V=\sum_h\wh\pi_h(\EE_hG_e-\ov G_{h,e}).
\]
On the event just described, multiply
each stratum-specific bound by $\wh\pi_h\ge0$, sum over strata,
and telescope along every candidate path as in \eqref{eq:telescoping}.
The result is \eqref{eq:one-stage-bound}.
\end{proof}

\begin{remark}[No unconditional dominance]\label{rem:no-dominance}
The single-stage radius \eqref{eq:one-stage-radius} dispenses with
$\alpha_v$, but it sums stratumwise deviations and pays the larger
empirical-Bernstein linear constant $7/3$. The calibrated
Bernstein radius of Theorem~\ref{th:empirical} instead pools the
variances before applying the square root, but it pays the predictor
bias and the split of $\alpha_v$. Neither is uniformly
smaller. Both should be computed as baselines under comparable
confidence and annotation budgets. If the final reported
certificate is selected as the \emph{minimum} of two
post-audit certificates, their failure probabilities must first
be jointly budgeted (or the choice between them must have been fixed
without audit labels); taking their minimum at their individual
nominal failure levels does not generally preserve those levels.
\end{remark}

%% =====================================================================
\section{Unknown stratum probabilities and certified regret}\label{sec:regret}
%% =====================================================================

Fix stratum $0$ as a reference and write, for any $T\in\CC$,
\begin{align}
 \Delta_T-\wh\Delta_T
 &=Z_T+\sum_h(\pi_h-\wh\pi_h)\mu_{h,T}\notag\\
 &=Z_T+\sum_{h=1}^{H-1}(\pi_h-\wh\pi_h)
                       (\mu_{h,T}-\mu_{0,T}).\label{eq:weight-identity}
\end{align}
The second identity exploits $\sum_h(\pi_h-\wh\pi_h)=0$ and is useful:
the cost of estimating the population weights is small when the
\emph{contrasts}, not necessarily the coverage levels, are similar
between strata.

Let $J=|\CC|-1$ be the number of nontrivial competitors, and suppose
$J\ge1$ (the case $J=0$ is handled below). Let
\begin{equation}\label{eq:weights-error}
 e_\pi=\sqrt{\frac{\log(2(H-1)/\alpha_\pi)}{2N_u}}.
\end{equation}
For each $h,T$ denote by $s_{h,T}^2$ the sample variance of the
$D_T(Q_{h,i})$, and define
\begin{equation}\label{eq:mean-radius}
 t_g=\log\frac{4HJ}{\alpha_g},\qquad
 r_{h,T}=\sqrt{\frac{2s_{h,T}^2t_g}{n_h}}
              +\frac{14t_g}{3(n_h-1)},
\end{equation}
and, for $h=1,\ldots,H-1$, the observable factor
\begin{equation}\label{eq:rho}
 \rho_{h,T}=\min\left\{2,\ |\ov D_{h,T}-\ov D_{0,T}|+r_{h,T}+r_{0,T}\right\}.
\end{equation}
The level $t_g=\log(4HJ/\alpha_g)$ is calibrated exactly: each of the
$2HJ$ one-sided statements used below is applied with
$\delta=\alpha_g/(2HJ)$, so that $\log(2/\delta)=t_g$. No independence
between these events and the chaining event is assumed. For $T=\wh S$
one has $\mu_{h,\wh S}=\ov D_{h,\wh S}=0$ deterministically, so no
probability is spent on $\wh S$.

\begin{lemma}[Uncertainty in unknown context proportions]\label{lem:propweights}
Under Assumptions~\ref{ass:unlabelled} and~\ref{ass:audit}, with probability
at least $1-\alpha_\pi-\alpha_g$, simultaneously for all $T\in\CC$,
\begin{equation}\label{eq:pi-correction}
 \left|\sum_h(\pi_h-\wh\pi_h)\mu_{h,T}\right|
 \le e_\pi\sum_{h=1}^{H-1}\rho_{h,T}.
\end{equation}
\end{lemma}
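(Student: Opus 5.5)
The plan is to combine two independent sources of control. The first is a concentration bound for the unlabelled proportions $\wh\pi_h$. The second is an empirical-Bernstein bound for the stratum contrast means that turns the unknown differences $|\mu_{h,T}-\mu_{0,T}|$ into the observable factors $\rho_{h,T}$. The two are then joined through the reference-stratum identity \eqref{eq:weight-identity}. No independence between the events is used; a union bound suffices.

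\textbf{Step 1 (weights).} Conditionally on $\mathscr F_0$, Assumption~\ref{ass:unlabelled} makes each $\wh\pi_h$ an average of $N_u$ iid Bernoulli$(\pi_h)$ variables. Apply the two-sided Hoeffding inequality to each $h=1,\ldots,H-1$ with failure probability $\alpha_\pi/(H-1)$. This gives $|\pi_h-\wh\pi_h|\le e_\pi$ simultaneously for all $h\ge1$ with probability at least $1-\alpha_\pi$; the two-sidedness accounts for the factor $2$ in \eqref{eq:weights-error}. Stratum $0$ needs no control, because it has been eliminated through $\sum_h(\pi_h-\wh\pi_h)=0$.

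\textbf{Step 2 (contrast means).} Condition on the pre-audit information and the counts. For each $h\in\SSS$ and each of the $J$ nontrivial $T\in\CC\setminus\{\wh S\}$, the values $D_T(Q_{h,i})\in[-1,1]$ are iid under Assumption~\ref{ass:audit}, and $T$ is fixed by $\mathscr F_0$. Apply Lemma~\ref{lem:empbern} with $b-a=2$ in both directions, each with $\delta=\alpha_g/(2HJ)$. Since $\log(2/\delta)=t_g$ and $7\cdot2/3=14/3$, this yields $|\mu_{h,T}-\ov D_{h,T}|\le r_{h,T}$ for all $h,T$ outside a set of probability at most $2HJ\cdot\alpha_g/(2HJ)=\alpha_g$. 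On this event the triangle inequality gives
\[
|\mu_{h,T}-\mu_{0,T}|\le|\ov D_{h,T}-\ov D_{0,T}|+r_{h,T}+r_{0,T}.
\]
Because $\mu_{h,T}\in[-1,1]$, we also have $|\mu_{h,T}-\mu_{0,T}|\le2$ deterministically, so $|\mu_{h,T}-\mu_{0,T}|\le\rho_{h,T}$. The conditional probability bound holds for every value of the conditioning variables, so it also holds unconditionally after averaging. For $T=\wh S$ both sides of \eqref{eq:pi-correction} are $0$, and the $\rho_{h,\wh S}$ are nonnegative, so that case is trivial.

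\textbf{Step 3 (combination).} On the intersection of the two events, which has probability at least $1-\alpha_\pi-\alpha_g$ by a union bound, the identity \eqref{eq:weight-identity} and the triangle inequality give
\[
\Bigl|\sum_h(\pi_h-\wh\pi_h)\mu_{h,T}\Bigr|\le\sum_{h=1}^{H-1}|\pi_h-\wh\pi_h|\,|\mu_{h,T}-\mu_{0,T}|\le e_\pi\sum_{h=1}^{H-1}\rho_{h,T}.
\]
This holds for all $T$ simultaneously.

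\textbf{Main obstacle.} The delicate point is the conditioning structure in Step 2. The audit counts and the pre-audit objects may depend on the unlabelled sample, which means Step 1 must be stated with respect to a different conditioning than Step 2. I would handle this by first proving each bound conditionally within its own sigma-field, with Step 1 conditional on $\mathscr F_0$ and Step 2 conditional on the pre-audit information and counts. I would then take expectations to obtain unconditional probabilities, and only after that combine the two failure probabilities by a union bound, which needs no independence. The remaining check is that $n_h\ge2$, so that Lemma~\ref{lem:empbern} applies.
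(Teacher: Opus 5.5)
Your proposal is correct and follows the paper's proof essentially step for step: two-sided Hoeffding over the $H-1$ nonreference strata, empirical Bernstein in both directions with $\delta=\alpha_g/(2HJ)$ and a union bound over the $2HJ$ one-sided statements, the triangle inequality with the deterministic cap $2$ to obtain $\rho_{h,T}$, and a final union bound through \eqref{eq:weight-identity}. One harmless slip: for $T=\wh S$ only the left side of \eqref{eq:pi-correction} vanishes (the right side is strictly positive because $r_{h,\wh S}>0$), but nonnegativity of the right side is all you actually use.
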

\begin{proof}
For each $h=1,\ldots,H-1$, Hoeffding's inequality, applied to the
Bernoulli indicators $\ind\{\phi(Q_i^{(u)})=h\}$, gives
\[
 \PP\{|\wh\pi_h-\pi_h|>e_\pi\}
 \le2\exp(-2N_ue_\pi^2)=\alpha_\pi/(H-1).
\]
Union over the $H-1$ nonreference strata yields the simultaneous
proportion event with probability at least $1-\alpha_\pi$.

Every $D_T$ takes values in $[-1,1]$. Apply Lemma~\ref{lem:empbern} in both
directions, with range $2$ and $\delta=\alpha_g/(2HJ)$ for each tail,
and take a union bound over the $2HJ$ one-sided statements for the
$J$ nontrivial competitors. This gives
$|\mu_{h,T}-\ov D_{h,T}|\le r_{h,T}$ for every $h$ and every
$T\in\CC\setminus\{\wh S\}$, except with probability at most
$\alpha_g$; for $T=\wh S$ the inequality is trivial.
Now use \eqref{eq:weight-identity}, bound
$|\pi_h-\wh\pi_h|\le e_\pi$, and insert
\[
 |\mu_{h,T}-\mu_{0,T}|
 \le\min\left\{2,\ |\ov D_{h,T}-\ov D_{0,T}|+r_{h,T}+r_{0,T}\right\}
 =\rho_{h,T},
\]
where the bound $2$ holds deterministically because
$\mu_{h,T}\in[-1,1]$. Intersection of the two events and a union bound
prove the assertion.
\end{proof}

\begin{remark}[Where the cardinality of $\CC$ re-enters]
\label{rem:weight-localization}
Because $\rho_{h,T}\le2$, the weight correction never exceeds
$2(H-1)e_\pi$. This crude level holds on the proportion event alone,
does not depend on $|\CC|$, and requires no mean intervals: if one sets
$\rho_{h,T}\equiv2$, Lemma~\ref{lem:propweights} holds with probability
at least $1-\alpha_\pi$ and $\alpha_g=0$ may be used in
Theorem~\ref{th:regret}. The empirical refinement through $r_{h,T}$,
in contrast, relies on a flat union bound over the $J$ competitors, so
a factor $\log J$ re-enters the certificate through $t_g$, multiplied
by $e_\pi$. The localization of Theorem~\ref{th:chain} therefore
concerns the audit fluctuation $Z_T$; the weight-estimation term is
controlled only through the factor $e_\pi$, of order $N_u^{-1/2}$,
which is small when the unlabelled sample is large.
\end{remark}

Define for every $T\in\CC\setminus\{\wh S\}$
\begin{equation}\label{eq:cert-T}
 U_T=\wh\Delta_T+\mathfrak R(T)
   +e_\pi\sum_{h=1}^{H-1}\rho_{h,T},
\end{equation}
and set $U_{\wh S}:=0$. The latter convention does not follow from
\eqref{eq:cert-T}, whose terms need not vanish at $T=\wh S$ (the path of
$\wh S$ in the network may pass through other nodes, and
$\rho_{h,\wh S}>0$); it is legitimate because $\Delta_{\wh S}=0$
deterministically, so no confidence statement is required for
$\wh S$. Let
\begin{equation}\label{eq:regret-cert}
 \ov R_{\mathrm{red}}=
   \min\left\{1,\ \max\left\{0,\max_{T\in\CC}U_T\right\}\right\}.
\end{equation}

\begin{theorem}[Honest postselection regret certificate]
\label{th:regret}
Let Assumptions~\ref{ass:unlabelled},~\ref{ass:audit},
and~\ref{ass:pred} hold. Suppose the pilot screening event
\eqref{eq:screen} has probability at least $1-\alpha_s$. Construct
the pre-audit networks as in Definition~\ref{def:network}, use the
observable edge radius from Theorem~\ref{th:empirical}, and form
\eqref{eq:cert-T}--\eqref{eq:regret-cert}. If
\[
 \alpha_s+\alpha_a+\alpha_v+\alpha_\pi+\alpha_g\le\alpha<1,
\]
then
\begin{equation}\label{eq:theorem-regret}
 \ \PP\{R_B(\wh S)\le\ov R_{\mathrm{red}}\}\ge1-\alpha.\ 
\end{equation}
If $\CC=\{\wh S\}$, define $\ov R_{\mathrm{red}}=0$; the
statement remains valid. The distribution $P$ and the proportions
$\pi_h$ need not be known or parametrically specified.
\end{theorem}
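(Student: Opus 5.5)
The plan is to intersect four good events and show that the certificate dominates the regret on their intersection. The events are:
\begin{itemize}
\item $\Omega_s$, the screening event \eqref{eq:screen} that some population maximizer $S^\star$ lies in $\CC$;
\item $\Omega_a\cap\Omega_v$, the event of Theorem~\ref{th:empirical} (hence of Theorem~\ref{th:chain}) on which $Z_T\le\mathfrak R(T)$ for all $T\in\CC$;
\item $\Omega_{\pi,g}$, the event of Lemma~\ref{lem:propweights} on which \eqref{eq:pi-correction} holds for all $T\in\CC$.
\end{itemize}
The probability of the intersection is handled by a union bound, so no independence between the events is needed. The one delicate point is that $\Omega_s$ has an unconditional guarantee, while the others are conditional on the pre-audit information and the counts.

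First I will treat the chaining and weight events conditionally. Given $\mathscr F_0$, the unlabelled sample and the counts, Theorem~\ref{th:empirical} gives conditional failure probability at most $\alpha_a+\alpha_v$. For Lemma~\ref{lem:propweights} the situation is less clean. Its proportion part is a statement about the unlabelled sample, which is iid from $P$ conditionally on $\mathscr F_0$ by Assumption~\ref{ass:unlabelled}, so it holds with probability $\ge1-\alpha_\pi$ conditionally on $\mathscr F_0$. Its mean part holds conditionally on the pre-audit information and counts, by Assumption~\ref{ass:audit}. I will therefore condition on $\mathscr F_0$ alone and use the tower property: average the conditional audit-level bounds over the unlabelled sample and counts, then add the proportion failure. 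This gives
\[
\PP\{(\Omega_a\cap\Omega_v\cap\Omega_{\pi,g})^c\mid\mathscr F_0\}\le\alpha_a+\alpha_v+\alpha_\pi+\alpha_g .
\]
Taking expectations and adding $\alpha_s$ for $\Omega_s^c$ yields total failure probability at most $\alpha$. Care is needed because the networks may depend on the unlabelled sample. This is acceptable, since the audit-level statements condition on it and the proportion bound does not involve the networks.

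Second, the deterministic step. On the intersection, take $S^\star\in\CC$. If $S^\star=\wh S$ the regret is $0\le\ov R_{\mathrm{red}}$, which covers the case $\CC=\{\wh S\}$. Otherwise, by \eqref{eq:weight-identity},
\[
R_B(\wh S)=\Delta_{S^\star}=\wh\Delta_{S^\star}+Z_{S^\star}+\sum_h(\pi_h-\wh\pi_h)\mu_{h,S^\star}\le U_{S^\star}\le\max_{T\in\CC}U_T .
\]
Since $R_B(\wh S)\in[0,1]$, truncation by $\max\{0,\cdot\}$ and $\min\{1,\cdot\}$ preserves the inequality, giving $R_B(\wh S)\le\ov R_{\mathrm{red}}$. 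The convention $U_{\wh S}=0$ only enters the outer maximum and is harmless, because the regret is nonnegative.

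The main obstacle is the conditioning bookkeeping in the first step. The concern is that $\CC$, $\wh S$, the networks and the predictors are random, and that the screening event concerns $\mathscr F_0$ while the audit statements concern the counts. I expect the tower-property argument above to settle this. Each conditional bound is uniform in the conditioning values, and each object it involves is measurable before the corresponding data, so averaging preserves the bounds.
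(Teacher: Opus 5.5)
Your proposal is correct and follows essentially the same route as the paper. You intersect the screening event with the simultaneous chaining event of Theorem~\ref{th:empirical} and the proportion/mean event of Lemma~\ref{lem:propweights} by a union bound, then apply identity~\eqref{eq:weight-identity} at a screened maximizer $S^\star\in\CC$ and truncate. Your tower-property bookkeeping, with the proportion event conditional on $\mathscr F_0$ only and the audit events conditional on the finer pre-audit information and counts, is a more explicit version of the paper's remark that the concentration statements hold conditionally on the pre-audit information.
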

\begin{proof}
By Theorem~\ref{th:empirical} and Lemma~\ref{lem:propweights},
with probability at least
$1-\alpha_a-\alpha_v-\alpha_\pi-\alpha_g$,
we have for every $T\in\CC\setminus\{\wh S\}$,
\[
 \Delta_T=\wh\Delta_T+Z_T+
              \sum_h(\pi_h-\wh\pi_h)\mu_{h,T}
 \le U_T,
\]
while $\Delta_{\wh S}=0=U_{\wh S}$ holds deterministically.
This simultaneous event is valid even though $\wh S,\CC$ and the
network were learned from the pilot, since the relevant concentration
statements hold conditionally on the pre-audit information and counts.
On the screening event, some population maximizer
$S^\star\in\CC$. Consequently,
\[
 R_B(\wh S)=F(S^\star)-F(\wh S)=\Delta_{S^\star}
 \le U_{S^\star}\le\max_{T\in\CC}U_T.
\]
Because $0\le R_B(\wh S)\le1$, truncation at $0$ and $1$ is harmless.
The union bound with the screening failure probability proves the result.
If $\CC=\{\wh S\}$, the screening event implies that $\wh S$ itself
is optimal, hence its regret is zero; off the screening event no
assertion is made.
\end{proof}

\begin{corollary}[Postselection certification without variance calibration]
\label{cor:regret-one-stage}
Let Assumptions~\ref{ass:unlabelled} and~\ref{ass:audit} hold, let
the pilot screening event \eqref{eq:screen} has probability at least
$1-\alpha_s$, and let the pre-audit networks be constructed as in
Definition~\ref{def:network}. For $T\in\CC\setminus\{\wh S\}$ put
\[
 U_T^{\mathrm{EB}}=\wh\Delta_T+
    \mathfrak R_{\mathrm{EB}}(T)+
    e_\pi\sum_{h=1}^{H-1}\rho_{h,T},
\]
with $\mathfrak R_{\mathrm{EB}}$ from \eqref{eq:one-stage-radius}, and
set $U_{\wh S}^{\mathrm{EB}}:=0$, which is legitimate because
$\Delta_{\wh S}=0$. Define
$\ov R_{\mathrm{EB}}=\min\{1,\max(0,\max_{T\in\CC}
U_T^{\mathrm{EB}})\}$, with the same singleton convention as in
Theorem~\ref{th:regret}. If
$\alpha_s+\alpha_a+\alpha_\pi+\alpha_g\le\alpha<1$, then
\[
 \PP\{R_B(\wh S)\le\ov R_{\mathrm{EB}}\}\ge1-\alpha.
\]
No assumption on pre-audit residual predictors is needed.
\end{corollary}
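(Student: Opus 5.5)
The proof follows the argument of Theorem~\ref{th:regret}, with Theorem~\ref{th:direct-empirical} replacing Theorem~\ref{th:empirical} as the source of the simultaneous audit bound. First, condition on the pre-audit information (pilot, unlabelled sample) and the counts $n_h$. Theorem~\ref{th:direct-empirical} then gives an event $\Omega_{\mathrm{EB}}$ of conditional probability at least $1-\alpha_a$ on which $Z_T\le\mathfrak R_{\mathrm{EB}}(T)$ for all $T\in\CC$. This step uses only Assumption~\ref{ass:audit} and the pre-audit constants $\bar b_{h,e}$. Since the bound holds conditionally for every realization of the conditioning variables, it also holds unconditionally after averaging. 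No predictors $m_{h,e}$ enter, so Assumption~\ref{ass:pred} and $\alpha_v$ are unnecessary.

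Second, invoke Lemma~\ref{lem:propweights}, which depends only on Assumptions~\ref{ass:unlabelled} and~\ref{ass:audit}. It gives an event $\Omega_\pi$ of probability at least $1-\alpha_\pi-\alpha_g$ on which
\[
\Bigl|\sum_h(\pi_h-\wh\pi_h)\mu_{h,T}\Bigr|\le e_\pi\sum_{h=1}^{H-1}\rho_{h,T}
\]
simultaneously for all $T\in\CC$. On $\Omega_{\mathrm{EB}}\cap\Omega_\pi$, the decomposition \eqref{eq:weight-identity} yields, for every $T\in\CC\setminus\{\wh S\}$,
\[
\Delta_T=\wh\Delta_T+Z_T+\sum_h(\pi_h-\wh\pi_h)\mu_{h,T}\le U_T^{\mathrm{EB}}.
\]
Meanwhile $\Delta_{\wh S}=0=U^{\mathrm{EB}}_{\wh S}$ holds deterministically. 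The union bound only adds failure probabilities, so it needs no independence between the chaining event and the events of Lemma~\ref{lem:propweights}, even though all of them use the same audit labels.

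Third, intersect with the screening event \eqref{eq:screen}, which has probability at least $1-\alpha_s$. On the resulting event some maximizer $S^\star$ lies in $\CC$, so
\[
R_B(\wh S)=\Delta_{S^\star}\le U^{\mathrm{EB}}_{S^\star}\le\max_{T\in\CC}U_T^{\mathrm{EB}}.
\]
Because $R_B(\wh S)\in[0,1]$, truncating at $0$ and $1$ preserves the inequality. The total failure probability is at most $\alpha_s+\alpha_a+\alpha_\pi+\alpha_g\le\alpha$. For the singleton case $\CC=\{\wh S\}$, on the screening event $\wh S$ is optimal, so its regret is $0=\ov R_{\mathrm{EB}}$.

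The argument is routine. The one point needing care is the measurability bookkeeping: $\CC$, the networks and the allocations $\alpha_\ell$ must be fixed before the audit labels are seen. Then the conditional statements of Theorem~\ref{th:direct-empirical} and Lemma~\ref{lem:propweights} remain valid after integrating over the random selection and the counts, which is exactly as in Theorem~\ref{th:regret}.
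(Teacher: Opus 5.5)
Your proposal is correct and follows essentially the same route as the paper's proof. Both intersect the simultaneous event of Theorem~\ref{th:direct-empirical} with the proportion and mean-calibration events of Lemma~\ref{lem:propweights} and with the screening event. Both then use identity~\eqref{eq:weight-identity} to get $\Delta_T\le U_T^{\mathrm{EB}}$ and conclude through a screened optimum exactly as in Theorem~\ref{th:regret}, with total failure probability at most $\alpha_s+\alpha_a+\alpha_\pi+\alpha_g$.
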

\begin{proof}
Intersect the simultaneous event of
Theorem~\ref{th:direct-empirical} with the proportion and mean
calibration events of Lemma~\ref{lem:propweights}, which uses only
Assumptions~\ref{ass:unlabelled} and~\ref{ass:audit}, and with the
screening event. On this intersection, identity~\eqref{eq:weight-identity}
gives $\Delta_T\le U_T^{\mathrm{EB}}$ for every $T\in\CC$; then argue
over a screened population optimum exactly as in the proof of
Theorem~\ref{th:regret}. The total failure probability is bounded by
$\alpha_s+\alpha_a+\alpha_\pi+\alpha_g$.
\end{proof}

\begin{corollary}[Reduced universe versus original corpus]
\label{cor:representation}
Suppose the original corpus contains a larger finite feasible family
$\HH_B^{\mathrm{full}}\supseteq\HH_B$ with optimal expected coverage
$V_B^{\mathrm{full}}=\max_{S\in\HH_B^{\mathrm{full}}}F(S)$.
If, with probability at least $1-\alpha_{\mathrm{rep}}$,
\[
 0\le V_B^{\mathrm{full}}-\max_{S\in\HH_B}F(S)
       \le\ov\rho_{\mathrm{rep}},
\]
then
\[
 \PP\left\{V_B^{\mathrm{full}}-F(\wh S)
 \le\ov\rho_{\mathrm{rep}}+\ov R_{\mathrm{red}}\right\}
 \ge1-\alpha-\alpha_{\mathrm{rep}}.
\]
\end{corollary}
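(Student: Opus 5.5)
The plan is a direct decomposition combined with a union bound over two events. Write
\[
 V_B^{\mathrm{full}}-F(\wh S)
 =\Bigl(V_B^{\mathrm{full}}-\max_{S\in\HH_B}F(S)\Bigr)
 +\Bigl(\max_{S\in\HH_B}F(S)-F(\wh S)\Bigr)
 =\Bigl(V_B^{\mathrm{full}}-\max_{S\in\HH_B}F(S)\Bigr)+R_B(\wh S),
\]
where the second identity uses the definition of $R_B$ in \eqref{eq:optimum}. This decomposition is valid because $S^\star$ is a maximizer of $F$ over $\HH_B$, so $F(S^\star)=\max_{S\in\HH_B}F(S)$ for any choice of maximizer, including when the maximizer is not unique. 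The identity is purely deterministic; it holds pointwise on the sample space even though $\wh S$ is random, since $\wh S\in\HH_B$ always.

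Two events then bound the two terms. Let $\Omega_{\mathrm{rep}}$ be the representation event in the hypothesis, of probability at least $1-\alpha_{\mathrm{rep}}$, on which the first bracket is at most $\ov\rho_{\mathrm{rep}}$. Let $\Omega_{\mathrm{red}}=\{R_B(\wh S)\le\ov R_{\mathrm{red}}\}$, which has probability at least $1-\alpha$ by Theorem~\ref{th:regret}. If the one-stage certificate is used instead, Corollary~\ref{cor:regret-one-stage} gives the same bound. On $\Omega_{\mathrm{rep}}\cap\Omega_{\mathrm{red}}$ the displayed identity yields $V_B^{\mathrm{full}}-F(\wh S)\le\ov\rho_{\mathrm{rep}}+\ov R_{\mathrm{red}}$. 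The union bound gives $\PP(\Omega_{\mathrm{rep}}^c\cup\Omega_{\mathrm{red}}^c)\le\alpha_{\mathrm{rep}}+\alpha$, which proves the claim.

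The only point needing care is that no independence between the two events is required. This is exactly why the union bound is used rather than a product of probabilities: $\ov\rho_{\mathrm{rep}}$ may be built from the same pilot or unlabelled data as $\wh S$ and $\CC$. A second point is that the truncation of $\ov R_{\mathrm{red}}$ at $1$ causes no harm, since the inequality on $\Omega_{\mathrm{red}}$ already holds for the truncated quantity. I do not expect a real obstacle; the argument is bookkeeping on top of Theorem~\ref{th:regret}.
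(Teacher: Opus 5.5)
Your proposal is correct and matches the paper's proof: the same add-and-subtract decomposition of $V_B^{\mathrm{full}}-F(\wh S)$ into the representation gap plus $R_B(\wh S)$, then intersecting the representation event with the coverage event of Theorem~\ref{th:regret}. The failure probabilities are combined by a union bound, with no independence needed. Your remarks on non-unique maximizers and on truncation are accurate extra bookkeeping.
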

\begin{proof}
Decompose
$V_B^{\mathrm{full}}-F(\wh S)
=[V_B^{\mathrm{full}}-\max_{S\in\HH_B}F(S)]+R_B(\wh S)$
and intersect the two coverage events.
\end{proof}

\begin{corollary}[Uniformly bounded relevance errors]
\label{cor:noise}
Suppose the audit measures functions $\widetilde r_j:\QQ\to[0,1]$
satisfying the deterministic uniform calibration condition
$\sup_{q,j}|\widetilde r_j(q)-r_j(q)|\le\eta$, and all hypotheses of
Theorem~\ref{th:regret} hold for the induced measured coverages
$\widetilde h_S$ on the same pre-audit family $\CC$. Let
$\widetilde U_T$ be their simultaneous contrast bounds and assume the
true-population screening event \eqref{eq:screen} has probability at
least $1-\alpha_s$ (Remark~\ref{rem:noisy-screening} gives a sufficient
rule). Then the \emph{true} reduced
regret is bounded by
\[
 R_B(\wh S)\le\min\{1,\max(0,\max_{T\in\CC}\widetilde U_T+2\eta)\}
\]
with the same joint confidence level as in Theorem~\ref{th:regret}.
\end{corollary}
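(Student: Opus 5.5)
The proof reduces to a pointwise comparison between true and measured contrasts, followed by a direct application of Theorem~\ref{th:regret} to the measured model. The first step is deterministic. For every nonempty $S$ and every query $q$ we have $|\widetilde h_S(q)-h_S(q)|\le\eta$, because the maximum is $1$-Lipschitz in the sup norm of its arguments:
\[
 \max_{j\in S}\widetilde r_j(q)\le\max_{j\in S}r_j(q)+\eta,
\]
and symmetrically. For $S=\varnothing$ both coverages vanish. Taking expectations under $P$ gives $|\widetilde F(S)-F(S)|\le\eta$ for all $S\in\HH_B$, where $\widetilde F(S)=\EE_P\widetilde h_S(Q)$. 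Hence the true and measured contrasts satisfy
\[
 \Delta_T\le\widetilde\Delta_T+2\eta\qquad\text{for every }T\in\CC .
\]

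Next I apply the simultaneous part of the proof of Theorem~\ref{th:regret} to the measured coverages $\widetilde h_S$. By hypothesis, all of its assumptions hold for these functions on the same $\mathscr F_0$-measurable family $\CC$ and networks. Theorem~\ref{th:empirical} and Lemma~\ref{lem:propweights}, applied to $\widetilde D_T=\widetilde h_T-\widetilde h_{\wh S}$, give an event $\Omega$ with probability at least $1-\alpha_a-\alpha_v-\alpha_\pi-\alpha_g$ on which $\widetilde\Delta_T\le\widetilde U_T$ for all $T\in\CC$. The convention $\widetilde U_{\wh S}=0$ is harmless here, since $\widetilde\Delta_{\wh S}=0$.

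I cannot simply quote the final conclusion of Theorem~\ref{th:regret}. That conclusion uses screening for the measured optimum, whereas here the screening event concerns a \emph{true} maximizer $S^\star\in\argmax F$. This is exactly why the statement assumes the true-population screening event \eqref{eq:screen}. On its intersection with $\Omega$ I then have
\[
 R_B(\wh S)=\Delta_{S^\star}\le\widetilde\Delta_{S^\star}+2\eta\le\widetilde U_{S^\star}+2\eta\le\max_{T\in\CC}\widetilde U_T+2\eta .
\]
Truncation at $0$ and $1$ is harmless because $R_B(\wh S)\in[0,1]$, and the singleton case $\CC=\{\wh S\}$ is handled as in Theorem~\ref{th:regret}. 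A union bound over the screening failure and the failure of $\Omega$ gives the same joint level $1-\alpha$.

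The main point requiring care is this decoupling of which optimum is screened. The rest is the routine check that the audit-based events are indifferent to whether labels are $r_j$ or $\widetilde r_j$. This holds because those events are conditional statements about the measured contrasts, whose values are all observed under Assumption~\ref{ass:audit}.
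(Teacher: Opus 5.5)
Your proposal is correct and follows essentially the same route as the paper. The paper also uses the $1$-Lipschitz property of the maximum to get $|\Delta_T-\widetilde\Delta_T|\le2\eta$, then intersects the measured simultaneous-contrast event with the true-population screening event to bound $\Delta_{S^\star}\le\widetilde U_{S^\star}+2\eta$, and finally truncates. Your point that only the simultaneous part of Theorem~\ref{th:regret}, and not its measured-screening conclusion, may be invoked is exactly how the paper's proof is organized.
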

\begin{proof}
The map $(x_j)_{j\in S}\mapsto\max_{j\in S}x_j$ is 1-Lipschitz
under the supremum norm. Thus
$|h_S(q)-\widetilde h_S(q)|\le\eta$ for every $S$ and $q$,
so $|\Delta_T-\widetilde\Delta_T|\le2\eta$ for all $T$.
On the simultaneous measured-contrast event and the true screening event,
$R_B(\wh S)=\Delta_{S^\star}\le\widetilde U_{S^\star}+2\eta$.
Truncation to $[0,1]$ completes the proof.
\end{proof}

\begin{remark}[Screening with measured pilot labels]
\label{rem:noisy-screening}
Corollary~\ref{cor:noise} requires the screening event for the
\emph{true} population. If the pilot of Proposition~\ref{prop:screen} is
itself labeled with the measured functions $\widetilde r_j$, its uniform
Hoeffding event controls $|\wh F_0(S)-\widetilde F(S)|$, where
$\widetilde F(S)=\EE_P\widetilde h_S(Q)$ and
$|\widetilde F(S)-F(S)|\le\eta$. On that event,
$\wh F_0(S^\star)-\wh F_0(\wh S)+2r_0\ge F(S^\star)-F(\wh S)-2\eta
\ge-2\eta$. Hence the rule \eqref{eq:pilot-screen} must be widened to
$\wh F_0(T)-\wh F_0(\wh S)+2r_0+2\eta\ge0$; with this change the
true-population screening event again has probability at least
$1-\alpha_s$.
\end{remark}

%\begin{remark}[What is not corrected]
%The procedure corrects neither within-stratum preferential annotation
%nor systematic relevance bias without extra hypotheses. In particular,
%\emph{empirical} agreement of relevance scores on a finite pilot does not
%by itself establish the deterministic uniform calibration hypothesis in
%Corollary~\ref{cor:noise}.
%\end{remark}

%% =====================================================================
\section{A complementary pooled stratified Bennett certificate}\label{sec:bennett}
%% =====================================================================

Before using the full multiscale construction, one may combine the
conditional stratum fluctuations using a single exponential tilt.
The result is included for completeness and as a benchmark, not
claimed to be an original concentration inequality.

For each candidate $T\in\CC$, denote
$\sigma^2_{h,T}=\VV(D_T(Q)\mid X=h)$. In the formulas below take
$J=|\CC|-1\ge1$. For every $h$ and every
$T\in\CC\setminus\{\wh S\}$, use a pre-audit predictor
$m_{h,T}\in[-1,1]$ satisfying the analogue of
Assumption~\ref{ass:pred}, and replace $G_e$ by $D_T$ in the
construction of Lemma~\ref{lem:residual}. The union bound now runs over
the $HJ$ pairs $(h,T)$, so $t_v$ is replaced by
$t_v'=\log(2HJ/\alpha_v')$. This gives simultaneous empirical upper
bounds $v^+_{h,T}\ge\sigma^2_{h,T}$ with failure probability at most
$\alpha_v'$. The budget $\alpha_v'$ is distinct from the budget
$\alpha_v$ of Lemma~\ref{lem:residual}; both must be paid if both
certificates are reported. Set
\begin{equation}\label{eq:bennett-not}
 w_h=\wh\pi_h,\qquad \beta_h=\frac{2w_h}{n_h},\qquad
 \mathcal K_T^+(\lambda)
 =\sum_{h\in\SSS}\frac{n_hv^+_{h,T}}{4}
   \left(e^{\lambda \beta_h}-1-\lambda \beta_h\right).
\end{equation}
If $w_h=0$, the corresponding summand is defined to be zero.

\begin{proposition}[Pooled Bennett bound with certified residual moments]
\label{prop:pooled-bennett}
Assume the model of Section~\ref{sec:model} and let every
$\lambda_T>0$ be fixed conditionally on the pre-audit information and
the audit stratum counts, \emph{without using audit relevance values}.
Then, with probability at least $1-\alpha_B-\alpha_v'$,
simultaneously for all $T\in\CC\setminus\{\wh S\}$,
\begin{equation}\label{eq:bennett-pooled}
 Z_T\le\mathcal B_T
 :=\frac{\log(J/\alpha_B)+\mathcal K_T^+(\lambda_T)}{\lambda_T}.
\end{equation}
Combining this with Lemma~\ref{lem:propweights} and pilot screening
provides an alternative population-regret certificate
\begin{equation}\label{eq:bennett-regret}
 \ov R_{\mathrm{Ben}}=\min\left\{1,\max\left\{0,\max_{T\in\CC}
 \left[\wh\Delta_T+\mathcal B_T
 +e_\pi\sum_{h=1}^{H-1}\rho_{h,T}\right]\right\}\right\},
\end{equation}
with coverage at least
$1-\alpha_s-\alpha_B-\alpha_v'-\alpha_\pi-\alpha_g$.
The bracket for $T=\wh S$ is replaced by $0$, which is legitimate
because $\Delta_{\wh S}=0$; if $J=0$, the certificate
is zero on the screening event.
\end{proposition}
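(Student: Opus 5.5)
The plan is a Chernoff (exponential tilt) bound for each fixed $T$, using the true stratum variances $\sigma^2_{h,T}$. I would then replace those variances by the certified $v^+_{h,T}$ through monotonicity. Finally, a union bound over the $J$ nontrivial competitors gives the simultaneous statement.

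Condition on the pre-audit information and counts, so that $w_h=\wh\pi_h$, $n_h$ and $\lambda_T$ are fixed. Write
\[
 Z_T=\sum_h\sum_{i=1}^{n_h}\xi_{h,i},\qquad \xi_{h,i}=\frac{w_h}{n_h}\bigl(\mu_{h,T}-D_T(Q_{h,i})\bigr).
\]
These are independent and centered, with $\xi_{h,i}\le \beta_h$ because $D_T\in[-1,1]$. Each satisfies $\EE\xi_{h,i}^2=(w_h/n_h)^2\sigma^2_{h,T}$.

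The standard Bennett moment-generating bound for a centered variable $\xi\le b$ is
\[
 \log\EE e^{\lambda\xi}\le \frac{\EE\xi^2}{b^2}\bigl(e^{\lambda b}-1-\lambda b\bigr).
\]
It follows from the fact that $(e^{x}-1-x)/x^2$ is nondecreasing, together with $\log(1+y)\le y$. Apply it with $b=\beta_h$ and $\EE\xi^2/b^2=\sigma^2_{h,T}/4$, then sum the $n_h$ terms in each stratum. This gives
\[
 \log\EE e^{\lambda Z_T}\le \mathcal K_T(\lambda):=\sum_h\frac{n_h\sigma^2_{h,T}}{4}\bigl(e^{\lambda\beta_h}-1-\lambda\beta_h\bigr).
\]
Strata with $w_h=0$ contribute zero, consistent with the stated convention.

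Markov's inequality at the fixed $\lambda_T$ then gives, for each $T$,
\[
 \PP\{Z_T>[\log(J/\alpha_B)+\mathcal K_T(\lambda_T)]/\lambda_T\}\le\alpha_B/J.
\]
Here it is essential that $\lambda_T$ is chosen without audit relevance values. Otherwise it would be random under the conditional law and Markov could not be applied at a fixed tilt. A union bound over the $J$ competitors gives failure probability at most $\alpha_B$.

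Next, intersect with the calibration event $\{\sigma^2_{h,T}\le v^+_{h,T}\ \forall h,T\}$. This event has failure probability at most $\alpha_v'$ by the analogue of Lemma~\ref{lem:residual}, with $D_T$ in place of $G_e$ and $t_v'$. Since $e^{x}-1-x\ge0$, $\mathcal K_T$ is nondecreasing in each variance, so $\mathcal K_T\le\mathcal K_T^+$ on the intersection. As in Theorem~\ref{th:chain}, no independence between the two events is used, which yields \eqref{eq:bennett-pooled}.

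For the regret certificate, I would argue exactly as in the proof of Theorem~\ref{th:regret}. Identity \eqref{eq:weight-identity} and Lemma~\ref{lem:propweights} give $\Delta_T\le\wh\Delta_T+\mathcal B_T+e_\pi\sum_h\rho_{h,T}$ for all $T\ne\wh S$. The case $T=\wh S$ is deterministic since $\Delta_{\wh S}=0$. On the screening event, $R_B(\wh S)=\Delta_{S^\star}$ is bounded by the maximum, and truncation to $[0,1]$ is harmless. If $J=0$, screening forces $\wh S$ to be optimal, so the certificate is zero.

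Summing the failure budgets $\alpha_s,\alpha_B,\alpha_v',\alpha_\pi,\alpha_g$ gives the stated coverage. The only delicate step is the exact form of the Bennett moment-generating bound with the one-sided bound $\beta_h$. I would check it via $e^{\lambda\xi}\le1+\lambda\xi+\xi^2\phi(\lambda b)/b^2$ for $\xi\le b$, where $\phi(x)=e^x-1-x$. This inequality holds because $\phi(x)/x^2$ is increasing on $\mathbb R$.
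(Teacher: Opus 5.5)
Your proposal is correct and follows the paper's proof essentially step for step. The steps are: the conditional Bennett cumulant bound for the centered summands bounded above by $\beta_h$; Markov's inequality at the pre-fixed tilt $\lambda_T$; a union bound over the $J$ competitors; intersection with the residual-moment event and monotone replacement of $\sigma^2_{h,T}$ by $v^+_{h,T}$; and the screening argument of Theorem~\ref{th:regret}. The only difference is that you derive the one-variable Bennett moment-generating bound from the monotonicity of $(e^x-1-x)/x^2$, whereas the paper cites Theorem~2.9 of \citet{BoucheronLugosiMassart13} with $n=1$.
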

\begin{proof}
Condition on the pilot, the independent unlabelled sample and the
stratum counts. For each $T$, define the centered summands
\[
 Y_{h,i,T}=\frac{w_h}{n_h}
   \big(\mu_{h,T}-D_T(Q_{h,i})\big).
\]
Their sum equals $Z_T$, their variances are
$(w_h/n_h)^2\sigma^2_{h,T}$, and, since $\mu_{h,T}-D_T\le2$, they are
almost surely at most $\beta_h$.
Bennett's moment-generating-function bound
\citep[Theorem~2.9]{BoucheronLugosiMassart13}, applied with $n=1$ to a
single centered random variable $Y\le\beta$ (so that $\EE Y^2$ is its
variance $v$), gives
\[
 \log\EE e^{\lambda Y}
 \le\frac{v}{\beta^2}(e^{\lambda\beta}-1-\lambda\beta)
 \quad(\beta>0,\ \lambda>0).
\]
Summing conditional cumulant bounds across the independent queries,
we obtain
\[
 \log\EE[e^{\lambda Z_T}\mid\text{pre-audit information, counts}]
 \le\sum_h\frac{n_h\sigma^2_{h,T}}4
       (e^{\lambda \beta_h}-1-\lambda \beta_h).
\]
When $\beta_h=0$ the variables vanish, so their contribution is zero.
Applying Markov's inequality with the fixed $\lambda_T$ to the event
\[
 Z_T>\frac{\log(J/\alpha_B)+
  \sum_h\frac{n_h\sigma^2_{h,T}}4
              (e^{\lambda_T \beta_h}-1-\lambda_T \beta_h)}{\lambda_T}.
\]
The conditional probability is at most $\alpha_B/J$.
A union bound over the $J$ nontrivial competitors yields an event of
probability at least $1-\alpha_B$ on which all those inequalities hold.
Intersect this event with the simultaneous residual-moment event of
probability at least $1-\alpha_v'$, and replace
$\sigma^2_{h,T}$ by $v^+_{h,T}$ using the monotonicity of the
exponential cumulant for $\lambda_T>0$. This gives
\eqref{eq:bennett-pooled}. The decomposition
\eqref{eq:weight-identity}, Lemma~\ref{lem:propweights}, and the
screening argument used in Theorem~\ref{th:regret} complete the proof
of \eqref{eq:bennett-regret} and its stated coverage probability.
\end{proof}

\begin{remark}[Data-dependent exponential tilting]
Optimizing $\lambda_T$ \emph{after} seeing the audit labels is not
justified by Proposition~\ref{prop:pooled-bennett}: a fixed-$\lambda$
Markov inequality is not automatically uniform in $\lambda$.
A grid with an explicitly allocated union bound or a time-uniform
mixture construction would be needed for such an optimization.
The same caution applies to choosing a multiscale network on the
audit labels without additional sample splitting or a uniform
complexity argument.
\end{remark}

%% =====================================================================
\section{Documentary activity and structural contraction}\label{sec:activity}
%% =====================================================================

The concentration theory above is valid for arbitrary bounded coverage
functions. We now exploit the particular maximum structure in
\eqref{eq:coverage}. For a nonempty set $R$, write
\[
 a_R(q)=\max_{g\in R}r_g(q),\qquad a_{\varnothing}(q)=0.
\]

\begin{lemma}[Single-fragment swap and activity]\label{lem:activity}
Let $j,k\notin R$, let $S=R\cup\{j\}$, $T=R\cup\{k\}$, and suppose
$\sup_{q\in\QQ}|r_j(q)-r_k(q)|\le\eps$. Define the activity event
\begin{equation}\label{eq:activity-def}
 A_{j,k,R}(q)=\ind\{\max(r_j(q),r_k(q))>a_R(q)\},
\end{equation}
and $a_h=\PP\{A_{j,k,R}(Q)=1\mid X=h\}$. Then for every $q$,
\begin{equation}\label{eq:activity-bound}
 |h_S(q)-h_T(q)|\le\eps A_{j,k,R}(q),
\end{equation}
and, for every stratum $h$,
\begin{equation}\label{eq:activity-var}
 \VV(h_S(Q)-h_T(Q)\mid X=h)\le\eps^2a_h,
 \qquad \osc_h(h_S-h_T)\le2\eps.
\end{equation}
Consequently the edge $e=(S,T)$ obeys
\begin{equation}\label{eq:activity-d2}
 d_2^2(e)\le\eps^2\sum_h\frac{\wh\pi_h^2}{n_h}a_h,
 \qquad d_\infty(e)\le2\eps\max_h\frac{\wh\pi_h}{n_h}.
\end{equation}
\end{lemma}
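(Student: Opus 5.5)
The plan is a pointwise bound first, with everything else following by integration and by substitution into \eqref{eq:d2}--\eqref{eq:dinfty}.

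\textbf{Pointwise bound \eqref{eq:activity-bound}.} Fix $q$ and write $a=a_R(q)$, $x=r_j(q)$, $y=r_k(q)$. Then $h_S(q)=\max(a,x)$ and $h_T(q)=\max(a,y)$; this also covers $R=\varnothing$, since $a=0$ and relevances are nonnegative. There are two cases.
\begin{itemize}
\item If $A_{j,k,R}(q)=0$, then $\max(x,y)\le a$, so both coverages equal $a$ and the difference vanishes.
\item If $A_{j,k,R}(q)=1$, the map $z\mapsto\max(a,z)$ is $1$-Lipschitz, so $|h_S(q)-h_T(q)|\le|x-y|\le\eps$.
\end{itemize}
Combining the cases gives $|h_S-h_T|\le\eps A_{j,k,R}$ everywhere.

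\textbf{Variance bound in \eqref{eq:activity-var}.} Set $G=h_S-h_T$. I bound the variance by the second moment, $\VV(G(Q)\mid X=h)\le\EE[G(Q)^2\mid X=h]$. By the pointwise bound, $G^2\le\eps^2A_{j,k,R}^2=\eps^2A_{j,k,R}$, because $A$ is an indicator. Taking conditional expectations gives $\eps^2a_h$.

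\textbf{Oscillation bound in \eqref{eq:activity-var}.} Since $|G|\le\eps$ pointwise, the essential supremum of $G$ under $P_h$ is at most $\eps$ and its essential infimum is at least $-\eps$. Hence $\osc_h(G)\le2\eps$.

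\textbf{Measurability.} The event $A_{j,k,R}$ must be measurable so that $a_h$ is well defined. This holds because the $r_g$ are measurable and finite maxima of measurable functions are measurable.

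\textbf{Edge bounds \eqref{eq:activity-d2}.} The edge function is $G_e=G_{S,T}=h_S-h_T$. I substitute the two bounds into the definitions \eqref{eq:d2} and \eqref{eq:dinfty}, using that the weights $\wh\pi_h^2/n_h$ and $\wh\pi_h/n_h$ are nonnegative, so monotonicity is preserved termwise and under the maximum.

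There is no serious obstacle. The only delicate point is the case split at the activity threshold. The strict inequality in the definition of $A$ is harmless: when $\max(x,y)=a$, both coverages equal $a$ and the difference is $0$.
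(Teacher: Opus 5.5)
Your proposal is correct and follows essentially the same route as the paper: the $1$-Lipschitz property of $u\mapsto\max(a_R(q),u)$ together with the observation that the coverages coincide when $A_{j,k,R}(q)=0$ gives the pointwise bound. Bounding the variance by the second moment, using $|h_S-h_T|\le\eps$ for the range, and substituting into \eqref{eq:d2}--\eqref{eq:dinfty} then gives the remaining bounds, exactly as in the paper's proof.
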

\begin{proof}
For every $a\ge0$, the scalar map $u\mapsto\max(a,u)$ is
$1$-Lipschitz. Therefore
\[
 |h_S(q)-h_T(q)|
 =|\max(a_R(q),r_j(q))-\max(a_R(q),r_k(q))|
 \le|r_j(q)-r_k(q)|\le\eps.
\]
When $A_{j,k,R}(q)=0$, both $r_j(q)$ and $r_k(q)$ are at most
$a_R(q)$, and the two coverages are identical. Combining the
observations proves \eqref{eq:activity-bound}. Squaring and taking
conditional expectations gives
\[
 \VV_h(h_S-h_T)\le\EE_h(h_S-h_T)^2
 \le\eps^2\EE_h A_{j,k,R}=\eps^2a_h.
\]
The range inequality follows because the difference is in
$[-\eps,\eps]$. Substitute these bounds into
\eqref{eq:d2}--\eqref{eq:dinfty} to obtain \eqref{eq:activity-d2}.
\end{proof}

%\begin{remark}[Semantic similarity versus functional redundancy]
%The factor $\eps$ controls the maximal difference between the two
%fragment relevance functions, while $a_h$ is the probability that this
%difference can \emph{affect} the coverage of the rest of the selected set.
%The second factor can be small even when the fragments are not uniformly
%close. This lemma needs the stated deterministic envelope on
%$|r_j-r_k|$; embedding similarity alone does not supply it.
%\end{remark}

\begin{proposition}[Rare-query obstruction to sample-only uniform envelopes]
\label{prop:rare-obstruction}
For every finite pilot size $n_0\ge1$ and every $\delta\in(0,1)$,
there exist a two-point query distribution and two relevance
functions with values in $[0,1]$ such that, with probability exceeding
$1-\delta$, the pilot observes identical relevance scores for both
fragments on all $n_0$ queries, although
$\sup_q|r_j(q)-r_k(q)|=1$.
Consequently, an empirical supremum of observed relevance
differences, without additional structural assumptions, is not a
valid distribution-free upper confidence bound for the population
uniform envelope.
\end{proposition}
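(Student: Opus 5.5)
The plan is an explicit construction. Take the query space $\QQ=\{q_0,q_1\}$ and, for a small parameter $p\in(0,1)$ to be fixed later, let $P(\{q_1\})=p$ and $P(\{q_0\})=1-p$. Define the relevance functions by
\[
 r_j(q_0)=r_k(q_0)=0,\qquad r_j(q_1)=1,\qquad r_k(q_1)=0 .
\]
Both take values in $[0,1]$, are trivially measurable, and satisfy $\sup_q|r_j(q)-r_k(q)|=|r_j(q_1)-r_k(q_1)|=1$. The stratum structure plays no role here, so one can take $X$ constant, or any admissible $\phi$; the claim concerns only the iid pilot of Proposition~\ref{prop:screen}.

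The next step is the probability calculation. The pilot queries $Q^{(0)}_1,\ldots,Q^{(0)}_{n_0}$ are iid from $P$. On the event that none of them equals $q_1$, the observed scores agree on every pilot query. By independence, this event has probability $(1-p)^{n_0}$. We then choose $p$ so that this probability exceeds $1-\delta$. One explicit choice is $p=\delta/(2n_0)$. Bernoulli's inequality then gives
\[
 (1-p)^{n_0}\ge 1-n_0p=1-\delta/2>1-\delta .
\]
Alternatively, one can take $p=1-(1-\delta/2)^{1/n_0}$, which lies in $(0,1)$. Keeping $p>0$ ensures that $q_1$ is genuinely in the support, so the supremum (and even the $P$-essential supremum) of $|r_j-r_k|$ equals $1$.

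For the ``consequently'' part, let $\wh\eps=\max_{i\le n_0}|r_j(Q^{(0)}_i)-r_k(Q^{(0)}_i)|$ be the empirical supremum. On the event just described, $\wh\eps=0<1$. Hence
\[
 \PP\{\wh\eps\ge\sup_q|r_j-r_k|\}<\delta .
\]
A valid upper confidence bound at level $1-\delta$ would need this probability to be at least $1-\delta$. That fails whenever $\delta<1/2$, and in fact it fails for every $\delta$: for any target level $1-\delta'$, one simply reruns the construction with $\delta$ replaced by $\min(\delta',1-\delta')$. The same argument also rules out any fixed inflation $\wh\eps+c$ with $c<1$, because $\wh\eps+c<1$ on the same event. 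This shows that the failure is distribution-free and not a matter of constants.

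There is no real technical obstacle. The only point needing care is to state precisely what ``valid distribution-free upper confidence bound'' means and to show that the construction defeats it for every prescribed confidence level. This is why I phrase the conclusion for an arbitrary statistic that is a function of the observed differences on the pilot: on the high-probability event, such a statistic sees the same data as when $r_j\equiv r_k$. In that case the true envelope is $0$, so any bound that is not trivial in that case (that is, not $\ge 1$) misses the true value $1$ here.
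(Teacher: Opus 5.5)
Your proposal is correct and follows the paper's proof essentially verbatim. It uses the same two-point distribution with $r_j(q_1)=1$, $r_k(q_1)=0$ and agreement at $q_0$, the same bound $(1-p)^{n_0}>1-\delta$ for small $p>0$, and the same extension to any procedure that outputs a value $c<1$ on the all-agreeing pilot. Your explicit choice $p=\delta/(2n_0)$ via Bernoulli's inequality (in place of the paper's $0<\theta<1-(1-\delta)^{1/n_0}$) and your care about nominal levels with $\delta\ge 1/2$ are only minor refinements.
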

\begin{proof}
Let $\QQ=\{0,1\}$ with $P(Q=1)=\theta>0$.
Set $r_j(0)=r_k(0)=0$, $r_j(1)=1$, and $r_k(1)=0$.
The true uniform difference equals $1$. Yet with probability
$(1-\theta)^{n_0}$ the entire iid pilot consists of queries equal
to $0$, on which the scores agree exactly. Choosing
$0<\theta<1-(1-\delta)^{1/n_0}$ makes this probability greater
than $1-\delta$. Thus a pilot-only empirical supremum of zero
is compatible with a population uniform difference of one, and it
covers the true envelope with probability less than $\delta$. More
generally, any procedure whose output on this all-zero pilot sample is
some value $c<1$ covers the true envelope $1$ with probability at most
$1-(1-\theta)^{n_0}<\delta$ under this distribution.
\end{proof}

\begin{proposition}[Simultaneously estimated activity factors]
\label{prop:act-est}
Let $\mathcal W$ be a finite set of pre-audit swap triples
$(j,k,R)$ with known deterministic envelopes $\eps_{j,k,R}$
as in Lemma~\ref{lem:activity}, and let $W=|\mathcal W|\ge1$.
Assume that, for every acquired audit query $Q_{h,i}$ and every
$w=(j,k,R)\in\mathcal W$, the relevance values $r_j(Q_{h,i})$,
$r_k(Q_{h,i})$ and $r_g(Q_{h,i})$, $g\in R$, are observed, so that the
indicators $A_w(Q_{h,i})$ are observed. This does not follow from
Assumption~\ref{ass:audit}, which guarantees only the coverages
$h_T(Q_{h,i})$, $T\in\CC$: these determine neither $A_w$ when
$h_S=h_T$, nor any quantity involving fragments outside the retained
competitor family.
For each $h$ and triple $w$, define
$\wh a_{h,w}=n_h^{-1}\sum_i A_w(Q_{h,i})$ and
\[
 a^+_{h,w}=\min\left\{1,\wh a_{h,w}
 +\sqrt{\frac{\log(HW/\alpha_{\mathrm{act}})}{2n_h}}\right\}.
\]
Then with probability at least $1-\alpha_{\mathrm{act}}$, simultaneously
for all $h,w$, $a_{h,w}\le a^+_{h,w}$. In particular,
\[
 \ov v_{(S,T)}^{\mathrm{act}}
 =\eps_{j,k,R}^2\sum_h\frac{\wh\pi_h^2}{n_h}a^+_{h,w}
\]
is a simultaneous variance bound for all such swap edges. It can be
combined by taking the minimum with the residual bound
\eqref{eq:observable-edge}, at the additional error cost
$\alpha_{\mathrm{act}}$.
\end{proposition}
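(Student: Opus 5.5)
The plan is to condition on the pre-audit information and the counts, as in Lemma~\ref{lem:residual}, so that the triples $w\in\mathcal W$, the envelopes $\eps_{j,k,R}$, the weights $\wh\pi_h$ and the counts $n_h$ are all fixed. Under Assumption~\ref{ass:audit}, together with the extra observability hypothesis, the indicators $A_w(Q_{h,1}),\ldots,A_w(Q_{h,n_h})$ are then iid Bernoulli variables with mean $a_{h,w}$ for each fixed pair $(h,w)$. Because the triples are fixed before the audit, no selection effect enters at this step.

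For a single pair, apply the one-sided Hoeffding inequality to variables in $[0,1]$. It gives $\PP\{a_{h,w}-\wh a_{h,w}>u\}\le\exp(-2n_hu^2)$. Choosing $u=\sqrt{\log(HW/\alpha_{\mathrm{act}})/(2n_h)}$ makes the right-hand side equal to $\alpha_{\mathrm{act}}/(HW)$. A union bound over the $HW$ pairs $(h,w)$ then gives the simultaneous event with conditional probability at least $1-\alpha_{\mathrm{act}}$. Truncating at $1$ is harmless because $a_{h,w}\le1$ always holds. Since the bound holds conditionally for every admissible value of the conditioning information, it also holds unconditionally.

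For the variance statement, work on this event and apply Lemma~\ref{lem:activity} with envelope $\eps_{j,k,R}$. It gives $d_2^2(S,T)\le\eps_{j,k,R}^2\sum_h(\wh\pi_h^2/n_h)a_{h,w}\le\ov v^{\mathrm{act}}_{(S,T)}$. The two inequalities use, respectively, $\wh\pi_h^2/n_h\ge0$ and the bound just proved. The orientation of the edge does not matter, because the variance and the envelope are symmetric in $(S,T)$.

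For the combination claim, intersect this event with the event of Lemma~\ref{lem:residual}. On the intersection, both $\ov v_e$ from \eqref{eq:observable-edge} and $\ov v^{\mathrm{act}}_e$ dominate $d_2^2(e)$, so their minimum does as well. The intersection fails with probability at most $\alpha_v+\alpha_{\mathrm{act}}$. Hence \eqref{eq:edge-calibration} holds with $\alpha_v$ replaced by $\alpha_v+\alpha_{\mathrm{act}}$, and Theorem~\ref{th:chain} applies without change.

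The main point needing care is to make sure that taking the minimum does not reintroduce selection. It is legitimate only because both bounds are simultaneously valid on a jointly budgeted event. This is unlike the post-hoc minimum of two certificates warned against in Remark~\ref{rem:no-dominance}. Beyond that, the argument is routine.
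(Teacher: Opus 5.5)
Your proposal is correct and follows the paper's own proof. Conditionally on the pre-audit information and counts, the indicators $A_w(Q_{h,i})$ are iid Bernoulli with mean $a_{h,w}$; a one-sided Hoeffding bound at level $\alpha_{\mathrm{act}}/(HW)$ with a union over the $HW$ pairs gives the simultaneous event. Lemma~\ref{lem:activity} then yields the variance bound, and the minimum with the residual bound is justified by intersecting the two simultaneous events at total cost $\alpha_v+\alpha_{\mathrm{act}}$. Your extra remarks are correct details the paper leaves implicit: truncation at $1$ is harmless, edge orientation does not matter, and this jointly budgeted minimum differs from the post-hoc minimum warned against in Remark~\ref{rem:no-dominance}.
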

\begin{proof}
For each $h,w$, the random variables $A_w(Q_{h,i})$ are iid Bernoulli
with conditional mean $a_{h,w}$. The one-sided Hoeffding inequality
gives failure probability at most $\alpha_{\mathrm{act}}/(HW)$ for
the displayed upper bound. Union over all $HW$ pairs, followed by
Lemma~\ref{lem:activity}, proves the variance assertion. To take the
minimum with the residual upper bounds, intersect their two
simultaneous validity events and use another union bound.
\end{proof}

\begin{proposition}[Pathwise extension to several swaps]\label{prop:swaps}
Suppose two selections $S,T$ can be connected by a finite path
$S=S_0,S_1,\ldots,S_m=T$ in which each consecutive pair differs by a
single fragment swap covered by Lemma~\ref{lem:activity}, with
parameters $\eps_k,a_{h,k}$. Then for each $h$,
\begin{equation}\label{eq:multiple-swaps}
 \sqrt{\EE[(h_S-h_T)^2\mid X=h]}
 \le\sum_{k=1}^m\eps_k\sqrt{a_{h,k}}.
\end{equation}
In particular,
\[
 \VV(h_S-h_T\mid X=h)
 \le\left(\sum_{k=1}^m\eps_k\sqrt{a_{h,k}}\right)^2.
\]
\end{proposition}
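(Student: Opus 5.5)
The plan is to telescope along the swap path and then apply Minkowski's inequality in the conditional $L^2(P_h)$ norm. Write
\[
 h_S-h_T=\sum_{k=1}^m\bigl(h_{S_{k-1}}-h_{S_k}\bigr)
\]
pointwise in $q$. This is an exact algebraic identity, because the intermediate terms cancel. Each consecutive pair $(S_{k-1},S_k)$ is by hypothesis a single-fragment swap covered by Lemma~\ref{lem:activity}. That means $S_{k-1}=R_k\cup\{j_k\}$ and $S_k=R_k\cup\{i_k\}$ with envelope $\eps_k$ and activity indicator $A_k=A_{j_k,i_k,R_k}$. The lemma's orientation is symmetric in the two fragments, so it does not matter which set plays the role of $S$.

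Next I apply \eqref{eq:activity-bound} to each step, which gives $|h_{S_{k-1}}(q)-h_{S_k}(q)|\le\eps_kA_k(q)$ for every $q$. Squaring, using $A_k^2=A_k$, and taking conditional expectations gives
\[
 \bigl\|h_{S_{k-1}}-h_{S_k}\bigr\|_{L^2(P_h)}=\sqrt{\EE[(h_{S_{k-1}}-h_{S_k})^2\mid X=h]}\le\eps_k\sqrt{a_{h,k}},
\]
where $a_{h,k}=\PP\{A_k(Q)=1\mid X=h\}$. The triangle (Minkowski) inequality in $L^2(P_h)$, applied to the telescoping sum, then yields \eqref{eq:multiple-swaps} directly. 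All functions involved are bounded, so every norm is finite and no integrability issue arises.

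For the variance statement I use $\VV(Y\mid X=h)\le\EE[Y^2\mid X=h]$ with $Y=h_S-h_T$ and square \eqref{eq:multiple-swaps}. Both sides are nonnegative, so squaring preserves the inequality.

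No step is a genuine obstacle. The only point needing care is to apply Minkowski to the root second moments rather than summing variances directly, since variances are not subadditive. The naive bound $\sum_k\eps_k^2a_{h,k}$ would require uncorrelated increments, which we do not have. A second point is to state clearly that each swap's parameters $\eps_k$ and $a_{h,k}$ refer to its own common part $R_k$, which changes along the path.
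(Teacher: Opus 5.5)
Your proposal is correct and matches the paper's proof: telescope pointwise along the swap path, apply Minkowski's inequality in $L_2(P_h)$, bound each summand via Lemma~\ref{lem:activity}, and pass to the variance through the uncentered second moment. You derive each step's second-moment bound directly from the pointwise inequality \eqref{eq:activity-bound} rather than from the lemma's stated variance bound, which is what Minkowski's inequality actually needs.
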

\begin{proof}
Pointwise,
\[
 h_S-h_T=\sum_{k=1}^m(h_{S_{k-1}}-h_{S_k}).
\]
Apply Minkowski's inequality in $L_2(P_h)$, then
Lemma~\ref{lem:activity} to each summand. Variance is at most the
uncentered second moment.
\end{proof}

%% =====================================================================
\section{A two-level sufficient condition for a sharper bound}\label{sec:gain}
%% =====================================================================

The multiscale construction is not automatically better than a direct
union bound, because it splits the error probability across levels.
Here is a sufficient condition for improvement.
In this section condition on the pre-audit information and counts, and
consider only the audit fluctuation $Z_T$, not the common error from
estimating population stratum proportions.

\begin{corollary}[Two-level improvement over a flat Bernstein union bound]
\label{cor:twolevel}
Suppose the $J=|\CC|-1\ge2$ nontrivial competitors admit $K_0$ distinct
representatives, $1\le K_0<J$, and a map
$c:\CC\setminus\{\wh S\}\to\{1,\ldots,K_0\}$
such that every candidate $T\ne\wh S$ is associated with a
representative $V_{c(T)}$ and
\begin{equation}\label{eq:uniform-clusters}
 \sup_{q\in\QQ}|h_T(q)-h_{V_{c(T)}}(q)|\le\eps,
 \qquad 0\le\eps\le1.
\end{equation}
Assume each representative belongs to $\CC$, and use the two-level
network $\wh S\to V_{c(T)}\to T$ for $T\ne\wh S$, while the
selected set follows the constant path
$\wh S\to\wh S\to\wh S$. The first-level network includes
$\wh S$ and all representatives. Define
\begin{equation}\label{eq:q-b}
 q_n=\sum_h\frac{\wh\pi_h^2}{n_h},\qquad
 b_n=2\max_h\frac{\wh\pi_h}{n_h},\qquad
 L_n(t)=\sqrt{2q_nt}+\frac13b_nt.
\end{equation}
For $\alpha_1+\alpha_2=\alpha_a$, the uniform two-level radius is
\begin{equation}\label{eq:two-level-radius}
 C_{\mathrm{chain}}
 =L_n\left(\log\frac{K_0}{\alpha_1}\right)
  +\eps L_n\left(\log\frac{J}{\alpha_2}\right).
\end{equation}
The flat cardinality-based direct Bernstein radius is
\begin{equation}\label{eq:flat-radius}
 C_{\mathrm{flat}}=L_n\left(\log\frac{J}{\alpha_a}\right).
\end{equation}
Whenever
\begin{equation}\label{eq:strict-gain}
 0\le\eps<
 \frac{L_n(\log(J/\alpha_a))
             -L_n(\log(K_0/\alpha_1))}
      {L_n(\log(J/\alpha_2))},
\end{equation}
with positive numerator, one has
$C_{\mathrm{chain}}<C_{\mathrm{flat}}$.
Both bounds apply simultaneously to all candidates with conditional
failure probability at most $\alpha_a$.
\end{corollary}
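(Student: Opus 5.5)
The plan has two parts: check that both radii are valid simultaneous bounds with conditional failure probability at most $\alpha_a$, then compare them algebraically. For validity of $C_{\mathrm{chain}}$, I will apply Theorem~\ref{th:chain} to the two-level network of the statement. Take $\mathcal N_0=\{\wh S\}$, $\mathcal N_1=\{\wh S\}\cup\{V_1,\ldots,V_{K_0}\}$ and $\mathcal N_2=\CC$. Set $\Pi_1(T)=V_{c(T)}$ and $\Pi_2(T)=T$ for $T\ne\wh S$, with $\Pi_1(\wh S)=\Pi_2(\wh S)=\wh S$. Then $\EEe_1$ consists of at most $K_0$ distinct edges $(V_k,\wh S)$, and $\EEe_2$ of at most $J$ edges $(T,V_{c(T)})$; edges with $T=V_{c(T)}$ are excluded by \eqref{eq:edge-sets}. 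Since $t_\ell=\log(|\EEe_\ell|/\alpha_\ell)$ is monotone in $|\EEe_\ell|$, replacing the cardinalities by $K_0$ and $J$ only enlarges the radius. If a level is empty it contributes nothing, so the bound still holds.

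For the calibration input \eqref{eq:edge-calibration}, I will use deterministic bounds, so that $\alpha_v=0$. On first-level edges, $G_e\in[-1,1]$ gives $v_{h,e}\le 1$ and $\osc_h\le2$, hence $d_2^2(e)\le q_n$ and $d_\infty(e)\le b_n$. On second-level edges, \eqref{eq:uniform-clusters} gives $|G_e|\le\eps$, hence $v_{h,e}\le\eps^2$ and $\osc_h\le 2\eps$. This yields $d_2^2(e)\le\eps^2q_n$ and $d_\infty(e)\le\eps b_n$. Substituting into \eqref{eq:radius}, the path radius of every $T\ne\wh S$ is at most $\sqrt{2q_nt_1}+\tfrac13 b_nt_1+\eps(\sqrt{2q_nt_2}+\tfrac13b_nt_2)$. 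By monotonicity in $t$ this is at most $C_{\mathrm{chain}}$, and $Z_{\wh S}=0$ makes the claim trivial for $\wh S$. Thus $Z_T\le C_{\mathrm{chain}}$ for all $T$ with conditional probability at least $1-\alpha_a$.

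For the flat bound, I will apply Lemma~\ref{lem:increment} directly to each edge $(T,\wh S)$ with $t=\log(J/\alpha_a)$ and the same crude bounds $q_n,b_n$, and then take a union bound over the $J$ competitors. One could equivalently view this as Theorem~\ref{th:chain} with $L=1$.

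The comparison is then immediate. Write $C_{\mathrm{chain}}<C_{\mathrm{flat}}$ as $\eps L_n(\log(J/\alpha_2))<L_n(\log(J/\alpha_a))-L_n(\log(K_0/\alpha_1))$. Since $J\ge2$ and $\alpha_2<1$, we have $\log(J/\alpha_2)>0$, and $L_n(\log(J/\alpha_2))>0$ provided $q_n>0$, which holds because $\wh\pi_h>0$ for some $h$. Dividing by this positive quantity gives exactly \eqref{eq:strict-gain}. The only delicate point is bookkeeping rather than analysis. One must confirm that the cardinality bounds $|\EEe_1|\le K_0$ and $|\EEe_2|\le J$ correctly account for the constant path of $\wh S$ and for a possible representative equal to $\wh S$; both only remove edges. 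One must also confirm that the representatives are pre-audit measurable, so that Theorem~\ref{th:chain} applies.
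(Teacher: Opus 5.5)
Your proposal is correct and follows essentially the same argument as the paper. On coarse edges you use the deterministic bounds $v_{h,e}\le1$ and $\osc_h\le2$; on fine edges you use $v_{h,e}\le\eps^2$ and $\osc_h\le2\eps$. You then take union bounds over at most $K_0$ and $J$ edges, telescope, apply a direct union bound over the $J$ flat edges, and finish by dividing by the positive quantity $L_n(\log(J/\alpha_2))$. Routing the chain part through Theorem~\ref{th:chain} with deterministic calibration ($\alpha_v=0$), instead of applying Lemma~\ref{lem:increment} level by level as the paper does, is only a repackaging of the same argument.
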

\begin{proof}
For a coarse edge $\wh S\to V_k$, its coverage difference lies
in $[-1,1]$, so its variance is at most $1$ and its range at most $2$
in each stratum. Lemma~\ref{lem:increment} therefore bounds the coarse
fluctuation by $L_n(\log(K_0/\alpha_1))$ simultaneously over the
$K_0$ distinct edges. For a fine edge $V_{c(T)}\to T$,
\eqref{eq:uniform-clusters} implies that the difference is in
$[-\eps,\eps]$, with variance at most $\eps^2$ and range at most
$2\eps$ in every stratum. Its Bernstein radius is therefore at most
$\eps L_n(\log(J/\alpha_2))$, simultaneously over at most $J$
fine edges (at most $J-K_0$ when the representatives are themselves
nontrivial competitors, since their fine edges are trivial; the
cruder count $J$ is kept for simplicity). The telescoping identity
yields \eqref{eq:two-level-radius}. Because all variance and range
bounds used here are deterministic, no calibration budget $\alpha_v$
is needed. Direct union over the $J$ edges from
$\wh S$ to each candidate yields \eqref{eq:flat-radius}.
Subtracting the two deterministic expressions, and dividing by the
strictly positive term $L_n(\log(J/\alpha_2))$, gives the sufficient
condition \eqref{eq:strict-gain}.
\end{proof}

\begin{remark}[Precise scope of the comparison]
Corollary~\ref{cor:twolevel} compares two conservative Bernstein
radii built from variance and range bounds, under common nominal confidence and
the same audit design. %It is not a comparison with the best possible
%audit-data-dependent empirical Bernstein bound, nor a universal dominance
%claim. In particular, the improvement in this corollary can disappear
%if the fine-scale similarity assumption is not certified.
\end{remark}

\paragraph{Numerical illustration (exact arithmetic on the formulas).}
Take $J=5000$, $K_0=5$, $H=2$, an audit count of $n_h=500$ in each of
the two strata, and $\wh\pi_0=\wh\pi_1=1/2$. Choose
$\alpha_a=0.05$, $\alpha_1=\alpha_2=0.025$, and $\eps=0.05$.
Then $q_n=0.001$ and $b_n=0.002$, so
\begin{equation}\label{eq:numeric-illustration}
 C_{\mathrm{flat}}\approx0.15942,\qquad
 C_{\mathrm{chain}}\approx0.11469,\qquad
 \frac{C_{\mathrm{flat}}-C_{\mathrm{chain}}}{C_{\mathrm{flat}}}
 \approx0.2806.
\end{equation}
The threshold on $\eps$ in \eqref{eq:strict-gain} is about
$0.3221$. This is a deterministic illustration of the algebraic
sufficient condition, not a simulation and not an experimentally
measured gain in total documentary regret.

%% =====================================================================
\section{Simulation study}\label{sec:simulation}
%% =====================================================================

This section examines three questions: whether the certificates attain their
nominal level in finite samples; how the observable radii of
Sections~\ref{sec:chain}--\ref{sec:bennett} compare in size; and when the
two-level network of Corollary~\ref{cor:twolevel} improves on a flat union
bound. %Python code reproducing all tables and figures is provided as supplementary material.

\subsection{Design}

\paragraph{Query population and fragments.}
There are $H=3$ observable strata, with probabilities
$\pi=(0.5,0.3,0.2)$. Given $X=h$, a
latent topic $Z\in\{1,\ldots,6\}$ is drawn with probabilities $\theta_h$,
\begin{align*}
 \theta_0&=(.35,.25,.15,.10,.10,.05),\\
 \theta_1&=(.05,.10,.15,.20,.20,.30),\\
 \theta_2&=(.10,.15,.30,.25,.10,.10).
\end{align*}
Each topic $c$ has a base relevance
$\tilde r_c(q)=0.75\,\ind\{Z=c\}+0.25\,W_c$, with $W_c\sim\mathrm{Beta}(1,3)$
independent. The corpus has $K=30$ fragments organized in six clusters of
five. The first fragment of cluster $c$ has relevance $\tilde r_c$; each other
fragment $j$ of the cluster has
$r_j=\min\{1,\max\{0,\tilde r_c+\eps U_j\}\}$, with $U_j\sim\mathrm{U}[-1,1]$
independent. Hence $|r_j-\tilde r_c|\le\eps$ deterministically: $\eps$
controls the functional redundancy within clusters, and $\eps=0$ gives exact
duplicates.

\paragraph{Selection, competitors and samples.}
All costs equal $1$ and $B=3$, so $\HH_B$ consists of the $4526$ subsets of
size at most $3$. We take $\CC=\HH_B$, so that $\alpha_s=0$ and
$J=4525$. In each replication, a fully labelled pilot of $n_0=1000$ iid
queries selects $\wh S$ by exhaustive maximization of $\wh F_0$, supplies the
residual predictors $m_{h,e}$ (pilot stratum means of $G_e$) and fixes the
Bennett tilts; an independent unlabelled sample of $N_u=50\,000$ queries
gives $\wh\pi_h$; and the audit draws $n_h$ queries from each stratum. The
population quantities $\mu_{h,T}$, $F(T)$ and $R_B(\wh S)$ are computed from
an independent Monte Carlo sample of $150\,000$ queries per stratum, with
standard errors below $1.3\times10^{-3}$.

\paragraph{Networks.}
For $T\in\HH_B$ let $V(T)$ be the set of first fragments of the clusters
met by $T$. Then $V(T)\in\HH_B$, and the $1$-Lipschitz property of the maximum, used
as in the proof of Corollary~\ref{cor:noise}, gives
$\sup_q|h_T(q)-h_{V(T)}(q)|\le\eps$. The two-level network is
$\wh S\to V(T)\to T$, with $42$ coarse and $4483$ fine edges in every
replication; the flat network joins every competitor directly to $\wh S$. In
this design the envelope $\eps$ is known by construction; in applications it
would have to be certified externally (Proposition~\ref{prop:rare-obstruction}).

\paragraph{Certificates and error budgets.}
All certificates have total level $\alpha=0.05$. The weight correction of
Lemma~\ref{lem:propweights} uses $\alpha_\pi=\alpha_g=0.005$ and is common to
all of them; the remaining $0.04$ is allocated to the audit fluctuation. We
compare seven certificates of the form $\ov R=\min\{1,\max(0,\max_TU_T)\}$:
\begin{itemize}
\item[] (i) \emph{Hoeffding, flat}: a union of one-sided Hoeffding bounds over
the $HJ$ pairs $(h,T)$, which gives
$Z_T\le\sum_h\wh\pi_h\,2\{\log(HJ/0.04)/(2n_h)\}^{1/2}$;
\item[] (ii)--(iv) \emph{one-stage empirical Bernstein}
(Theorem~\ref{th:direct-empirical}, Corollary~\ref{cor:regret-one-stage}) with
$\alpha_a=0.04$: flat; two-level with $\alpha_1=\alpha_2=0.02$ and the known
envelope $\bar b_{h,e}=2\eps$ on fine edges; and two-level with the universal
envelope $\bar b_{h,e}=2$;
\item[] (v)--(vi) \emph{calibrated Bernstein}
(Lemma~\ref{lem:residual}, Theorem~\ref{th:empirical},
Theorem~\ref{th:regret}) with $\alpha_a=0.03$ and $\alpha_v=0.01$: flat, and
two-level with $\alpha_1=\alpha_2=0.015$. On fine edges, where
$|G_e|\le\eps$, the residual range is taken as $(\eps+|m_{h,e}|)^2$ and the
variance bound is truncated at $\eps^2$; both are direct consequences of
Lemma~\ref{lem:empbern} and of Popoviciu's inequality for a variable with
range $2\eps$;
\item[] (vii) \emph{pooled Bennett} (Proposition~\ref{prop:pooled-bennett})
with $\alpha_B=0.03$ and $\alpha_v'=0.01$. Each $\lambda_T$ minimizes, over a
fixed grid, the bound \eqref{eq:bennett-pooled} in which $v^+_{h,T}$ is
replaced by the same residual formula evaluated on the pilot with the known
audit counts; thus $\lambda_T$ uses no audit label.
\end{itemize}
We vary $n_h\in\{100,200,400,800,1600\}$ at $\eps=0.05$, and
$\eps\in\{0,0.02,0.05,0.1,0.2,0.4\}$ at $n_h=400$, with $200$ replications
per setting.

\subsection{Results}

\paragraph{Validity.}
In all $10$ settings and for all seven certificates, the certificate covered
the true regret in every replication, and the simultaneous audit event
$\{Z_T\le\text{radius}(T)\ \forall T\}$ held in every replication. With
$200$ replications, the one-sided $95\%$ Clopper--Pearson lower bound for each
coverage probability is $0.985$. As expected from union bounds with
conservative constants, the certificates are valid with room to spare.

\begin{table}[t]
\centering\small
\caption{Mean regret certificate $\ov R$ over 200 replications, $\eps=0.05$, as a function of the number $n_h$ of audited queries per stratum. Standard errors are below $0.003$ in all cells. The last rows give the mean true regret $R_B(\wh S)$ and the mean largest weight-correction term $\max_T e_\pi\sum_{h\ge1}\rho_{h,T}$, common to all certificates.}\label{tab:sim-n}
\begin{tabular}{@{}lccccc@{}}\hline
Certificate & $n_h=100$ & $n_h=200$ & $n_h=400$ & $n_h=800$ & $n_h=1600$\\ \hline
Hoeffding, flat & 0.550 & 0.381 & 0.265 & 0.187 & 0.134\\
One-stage EB, flat (Thm.~\ref{th:direct-empirical}) & 0.930 & 0.507 & 0.276 & 0.156 & 0.092\\
One-stage EB, two-level & 0.744 & 0.409 & 0.224 & 0.129 & 0.077\\
One-stage EB, two-level, $\bar b_{h,e}=2$ & 1.000 & 0.723 & 0.381 & 0.208 & 0.117\\
Calibrated Bernstein, flat (Thm.~\ref{th:empirical}) & 0.340 & 0.186 & 0.101 & 0.061 & 0.037\\
Calibrated Bernstein, two-level & 0.293 & 0.161 & 0.088 & 0.053 & 0.033\\
Pooled Bennett (Prop.~\ref{prop:pooled-bennett}) & 0.332 & 0.182 & 0.099 & 0.060 & 0.037\\
\hline
True regret & 0.0037 & 0.0027 & 0.0029 & 0.0035 & 0.0044\\
Weight correction & 0.033 & 0.028 & 0.019 & 0.014 & 0.012\\
\hline\end{tabular}
\end{table}

\paragraph{Size of the certificates.}
Table~\ref{tab:sim-n} and Figure~\ref{fig:sim-n} report the mean certificates
at $\eps=0.05$. The calibrated Bernstein certificate of
Theorem~\ref{th:regret} and the pooled Bennett certificate are nearly
indistinguishable, and both are about $2.5$--$2.7$ times smaller than the
one-stage empirical-Bernstein certificate at every $n_h$. The one-stage
certificate pays the linear term $7\bar b_{h,e}t/\{3(n_h-1)\}$ in every
stratum; for $n_h\le400$ it is even larger than the flat Hoeffding
certificate, which it overtakes only for $n_h\ge800$. In this design,
therefore, the separate calibration budget $\alpha_v$ is well spent, in line
with Remark~\ref{rem:no-dominance}. All certificates
remain far above the true regret, which is about $0.003$: at $n_h=1600$ the
smallest certificate is $0.033$, of which about $0.012$ is the
weight-correction term. This gap is the price of a simultaneous, honest
guarantee over $4525$ data-dependent competitors.

\begin{figure}[t]
\centering
\IfFileExists{fig-cert-vs-n.pdf}{\includegraphics[width=\textwidth]{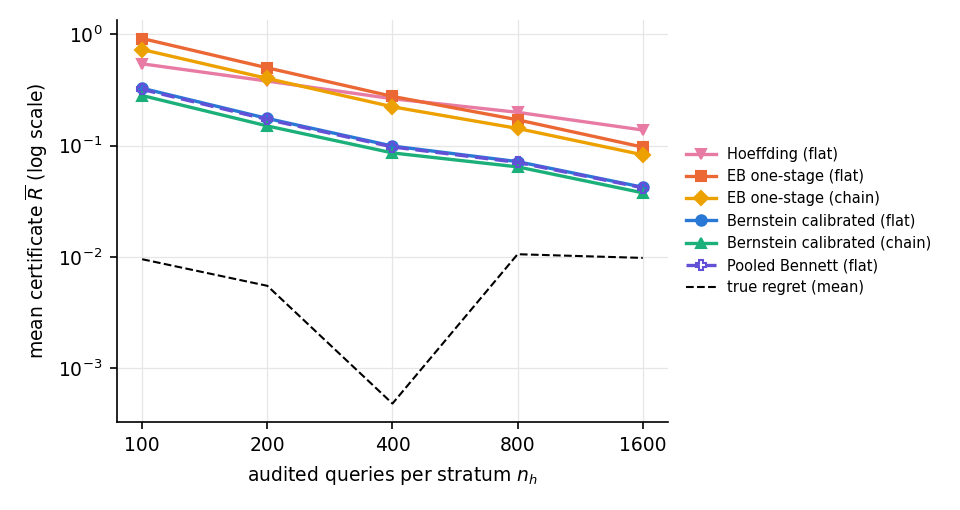}}{%
  \IfFileExists{fig-cert-vs-n.png}{\includegraphics[width=\textwidth]{fig-cert-vs-n}}{%
    \fbox{\parbox[c][35mm][c]{0.85\linewidth}{\centering Figure missing: \texttt{fig-cert-vs-n}\\ Add the original PDF or PNG before submission.}}%
  }%
}
\caption{Mean regret certificate $\ov R$ (logarithmic scales) over 200 replications at $\eps=0.05$, as a function of the number of audited queries per stratum. The pooled Bennett certificate (dashed, open marks) nearly coincides with the flat calibrated Bernstein certificate. The dashed black line is the mean true regret.}\label{fig:sim-n}
\end{figure}

\begin{table}[t]
\centering\small
\caption{Mean regret certificate $\ov R$ over 200 replications at $n_h=400$, as a function of the redundancy envelope $\eps$. Rows marked ``ratio'' divide the two-level certificate by the corresponding flat one.}\label{tab:sim-eps}
\begin{tabular}{@{}lcccccc@{}}\hline
Certificate & $\eps=0$ & $\eps=0.02$ & $\eps=0.05$ & $\eps=0.1$ & $\eps=0.2$ & $\eps=0.4$\\ \hline
Calibrated Bernstein, flat & 0.101 & 0.104 & 0.101 & 0.104 & 0.107 & 0.109\\
Calibrated Bernstein, two-level & 0.082 & 0.087 & 0.088 & 0.097 & 0.113 & 0.139\\
\quad ratio & 0.81 & 0.84 & 0.87 & 0.93 & 1.05 & 1.27\\ \hline
One-stage EB, flat & 0.278 & 0.280 & 0.276 & 0.278 & 0.280 & 0.276\\
One-stage EB, two-level & 0.209 & 0.218 & 0.224 & 0.243 & 0.277 & 0.337\\
One-stage EB, two-level, $\bar b_{h,e}=2$ & 0.374 & 0.380 & 0.381 & 0.391 & 0.409 & 0.436\\
\quad ratio (known envelope) & 0.75 & 0.78 & 0.81 & 0.87 & 0.99 & 1.22\\
\quad ratio ($\bar b_{h,e}=2$) & 1.35 & 1.36 & 1.38 & 1.41 & 1.46 & 1.58\\ \hline
Pooled Bennett, flat & 0.099 & 0.102 & 0.099 & 0.102 & 0.105 & 0.107\\
\hline\end{tabular}
\end{table}

\paragraph{When does the multiscale network help?}
Table~\ref{tab:sim-eps} and Figure~\ref{fig:sim-eps} vary the redundancy
envelope at $n_h=400$. With a known envelope, the two-level certificate is
smaller than the flat one for small $\eps$: by $19\%$ ($\eps=0$), $13\%$
($\eps=0.05$) and $7\%$ ($\eps=0.1$) for the calibrated Bernstein version, and
by $25\%$, $19\%$ and $13\%$ for the one-stage version. The advantage
vanishes between $\eps=0.1$ and $\eps=0.2$ for the calibrated version and at
about $\eps=0.2$ for the one-stage version, and it turns into a loss at
$\eps=0.4$. This crossover agrees with the deterministic threshold of
Corollary~\ref{cor:twolevel}, which equals about $0.19$ for $K_0=42$,
$J=4525$, $n_h=400$, $\alpha_1=\alpha_2$ and the present weights. The
reduction of the radius at the population optimum can be much larger than
the reduction of the certificate (from $0.061$ to $0.013$ at $\eps=0$ for the
calibrated version), because the certificate is a maximum over all
competitors. Finally, without a certified envelope ($\bar b_{h,e}=2$ on fine
edges) the two-level certificate is $35$--$58\%$ \emph{larger} than the flat
one: splitting the error budget across levels is then not compensated by any
reduction of the linear term. This illustrates why the gains of
Section~\ref{sec:gain} require certified, not merely empirical, envelopes
(Proposition~\ref{prop:rare-obstruction}).

\begin{figure}[t]
\centering
\IfFileExists{fig-ratio-vs-eps.pdf}{\includegraphics[width=0.85\textwidth]{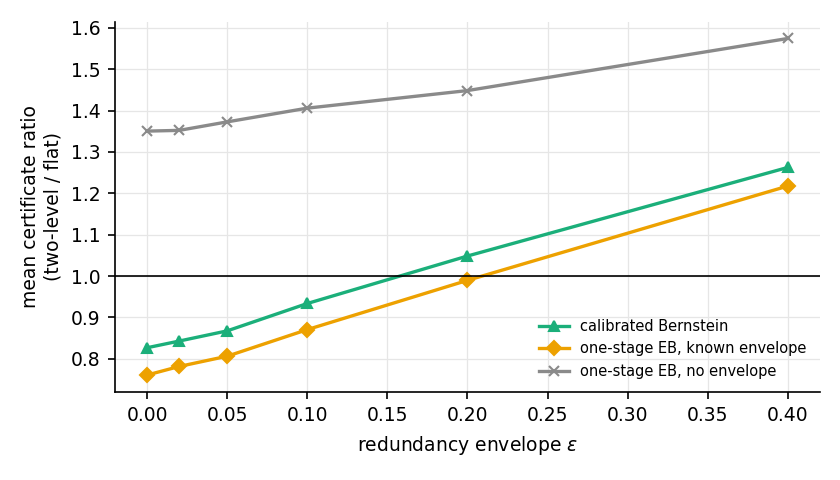}}{%
  \IfFileExists{fig-ratio-vs-eps.png}{\includegraphics[width=0.85\textwidth]{fig-ratio-vs-eps}}{%
    \fbox{\parbox[c][35mm][c]{0.85\linewidth}{\centering Figure missing: \texttt{fig-ratio-vs-eps}\\ Add the original PDF or PNG before submission.}}%
  }%
}
\caption{Ratio of the mean two-level certificate to the mean flat certificate at $n_h=400$, as a function of the redundancy envelope $\eps$; values below $1$ indicate a gain from the multiscale network.}\label{fig:sim-eps}
\end{figure}

\paragraph{Summary.}
All certificates are valid. Among the fully observable ones, variance
calibration (Theorem~\ref{th:regret}) or the pooled Bennett tilt
(Proposition~\ref{prop:pooled-bennett}) gives the smallest flat
certificates, and the multiscale network yields an additional reduction of
up to about $20\%$ when fragments are strongly redundant and the redundancy is
certified, in agreement with Corollary~\ref{cor:twolevel}.

%% =====================================================================
%\section{Discussion, limitations, and research direction}\label{sec:discussion}
%% =====================================================================

%\paragraph{What has been proved.}
We have specified a complete finite-sample probability argument:
pilot screening, conditional concentration of every pre-audit comparison
edge, telescoping over finite multiscale networks, simultaneous empirical
variance calibration, correction for unknown context proportions, and
conversion to postselection regret. The redundancy lemmas give explicit
variance contractions for swapping documentary fragments. The confidence
level in Theorem~\ref{th:regret} is unconditional: it averages over the
pilot, whose only contribution is the screening error $\alpha_s$, and
it requires no model for $P$. Conditionally on $\mathscr F_0$, only the
post-screening part of the statement holds, with level
$1-\alpha_a-\alpha_v-\alpha_\pi-\alpha_g$. %The procedure is honest, but its
%radius need not be small.
%Corollary~\ref{cor:regret-one-stage} provides a separate,
%fully observable benchmark that avoids the variance-calibration split. The
%simulations of Section~\ref{sec:simulation} indicate that, in a realistic
%redundant-fragment design, the calibrated certificate is markedly smaller than
%the one-stage one, and that the multiscale network adds a further reduction
%only when redundancy envelopes are small and certified.

%\paragraph{Sampling qualifications.}
%An acquired query is assumed to reveal all fragment relevances needed
%for the retained contrasts; if only selected fragment--query pairs are
%labeled, the missing labels require a separate design. Likewise,
%selection with probabilities depending on unmodeled features within
%strata invalidates Assumption~\ref{ass:audit}. Sequential stopping on
%observed labels is outside this paper's fixed-audit guarantee and would
%require time-uniform methods (see, e.g., \citealp{HowardEtAl21}).
%The effective cost is in labeled \emph{fragment--query pairs}, not
%merely in acquired query counts. A small audit radius obtained after a
%large, fully labeled pilot is not necessarily cost-effective overall.

%\paragraph{Geometry and the limits of observability.}
The sets $\mathcal N_\ell$ may be constructed using an independent pilot,
including estimated similarities. Such a construction does not need to
be accurate for the base theorem: validity follows from the edgewise
simultaneous concentration with conservative envelopes.% However, a


\begin{thebibliography}{99}
%% =====================================================================

\bibitem[Bartlett et~al.(2005)]{BartlettBousquetMendelson05}
Bartlett, P.~L., Bousquet, O., and Mendelson, S. (2005).
Local Rademacher complexities. \emph{The Annals of Statistics},
33(4), 1497--1537. \href{https://doi.org/10.1214/009053605000000282}{doi:10.1214/009053605000000282}.

\bibitem[Boucheron et~al.(2013)]{BoucheronLugosiMassart13}
Boucheron, S., Lugosi, G., and Massart, P. (2013).
\emph{Concentration Inequalities: A Nonasymptotic Theory of Independence}.
Oxford University Press.
\href{https://doi.org/10.1093/acprof:oso/9780199535255.001.0001}{doi:10.1093/acprof:oso/9780199535255.001.0001}.

\bibitem[Burgess and Chapman(2021)]{BurgessChapman21}
Burgess, M.~A. and Chapman, A.~C. (2021).
Approximating the Shapley value using stratified empirical Bernstein sampling.
In \emph{Proceedings of the 30th International Joint Conference on Artificial Intelligence},
pp.~73--81.
\href{https://doi.org/10.24963/ijcai.2021/11}{doi:10.24963/ijcai.2021/11}.

\bibitem[Dirksen(2015)]{Dirksen15}
Dirksen, S. (2015). Tail bounds via generic chaining.
\emph{Electronic Journal of Probability}, 20(53), 1--29.
\href{https://doi.org/10.1214/EJP.v20-3760}{doi:10.1214/EJP.v20-3760}.

\bibitem[Dud\'{\i}k et~al.(2011)]{DudikLangfordLi11}
Dud\'{\i}k, M., Langford, J., and Li, L. (2011).
Doubly robust policy evaluation and learning.
In \emph{Proceedings of the 28th International Conference on Machine Learning},
pp.~1097--1104. \url{https://icml.cc/2011/papers/554_icmlpaper.pdf}.

\bibitem[Hoeffding(1963)]{Hoeffding63}
Hoeffding, W. (1963). Probability inequalities for sums of bounded random
variables. \emph{Journal of the American Statistical Association},
58(301), 13--30.
\href{https://doi.org/10.1080/01621459.1963.10500830}{doi:10.1080/01621459.1963.10500830}.

\bibitem[Howard et~al.(2021)]{HowardEtAl21}
Howard, S.~R., Ramdas, A., McAuliffe, J., and Sekhon, J. (2021).
Time-uniform, nonparametric, nonasymptotic confidence sequences.
\emph{The Annals of Statistics}, 49(2), 1055--1080.
\href{https://doi.org/10.1214/20-AOS1991}{doi:10.1214/20-AOS1991}.

\bibitem[Koltchinskii(2006)]{Koltchinskii06}
Koltchinskii, V. (2006). Local Rademacher complexities and oracle
inequalities in risk minimization. \emph{The Annals of Statistics},
34(6), 2593--2656.
\href{https://doi.org/10.1214/009053606000001019}{doi:10.1214/009053606000001019}.

\bibitem[Krause and Golovin(2014)]{KrauseGolovin14}
Krause, A. and Golovin, D. (2014). Submodular function maximization.
In Bordeaux, L., Hamadi, Y., and Kohli, P. (eds.), \emph{Tractability:
Practical Approaches to Hard Problems}, Cambridge University Press,
pp.~71--104. \href{https://doi.org/10.1017/CBO9781139177801.004}{doi:10.1017/CBO9781139177801.004}.

\bibitem[Maurer and Pontil(2009)]{MaurerPontil09}
Maurer, A. and Pontil, M. (2009).
Empirical Bernstein bounds and sample variance penalization.
In \emph{Proceedings of the 22nd Annual Conference on Learning Theory (COLT)}.
\url{https://arxiv.org/abs/0907.3740}.

\bibitem[Nemhauser et~al.(1978)]{NemhauserWolseyFisher78}
Nemhauser, G.~L., Wolsey, L.~A., and Fisher, M.~L. (1978).
An analysis of approximations for maximizing submodular set functions---I.
\emph{Mathematical Programming}, 14, 265--294.
\href{https://doi.org/10.1007/BF01588971}{doi:10.1007/BF01588971}.

\bibitem[Talagrand(2005)]{Talagrand05}
Talagrand, M. (2005). \emph{The Generic Chaining: Upper and Lower Bounds of Stochastic Processes}.
Springer. \href{https://doi.org/10.1007/3-540-27499-5}{doi:10.1007/3-540-27499-5}.

\end{thebibliography}
\end{document}